\documentclass[reqno,b5paper]{amsart}
\usepackage{amsmath}
\usepackage{amssymb}
\usepackage{amsthm}
\usepackage{enumerate}
\usepackage[mathscr]{eucal}
\usepackage{eqlist}


\theoremstyle{plain}
\newtheorem{theorem}{Theorem}[section]
\newtheorem{prop}[theorem]{Proposition}
\newtheorem{lemma}{Lemma}[section]


\theoremstyle{definition}
\newtheorem{definition}{Definition}[section]



\theoremstyle{remark}



\numberwithin{equation}{section}



\begin{document}

\title[Report on the absolute differential equations I]%
{Report on the absolute differential equations I}

\author[Veronika Chrastinov\'a \and V\'aclav Tryhuk]%
{Veronika Chrastinov\'a* \and V\'aclav Tryhuk**}

\newcommand{\acr}{\newline\indent}

\address{Brno University of Technology\acr
Faculty of Civil Engineering\acr
\llap{*\,}Department of Mathematics\acr
\llap{**\,}AdMaS Center\acr
Veve\v{r}\'{\i} 331/95, 602 00 Brno\acr
Czech Republic}
\email{chrastinova.v@fce.vutbr.cz,tryhuk.v@fce.vutbr.cz,tryhuk.v@outlook.com}

\thanks{This paper was elaborated with the financial support of the European
Union's "Operational Programme Research and Development for
Innovations", No. CZ.1.05/2.1.00/03.0097, as an activity of the
regional Centre AdMaS "Advanced Materials, Structures and
Technologies".}

\subjclass[2010]{35-02, 35A30, 58A20, 58J70}

\keywords{higher--order transformations, symmetry, diffiety, involutivity, controllability, characteristics}

\begin{abstract}
The article provides a~modest survey of the absolute theory of general systems of (partial) differential equations. The equations are relieved of all additional structures and subject to quite arbitrary change of the variables. An~abstract mathematical theory in the Bourbaki sense with its own concepts and technical tools follows. In particular the external, internal, generalized and higher--order symmetries and infinitesimal symmetries together with the E. Cartan's prolongations, various characteristics, the involutivity and the controllability structures are clarified in genuinely coordinate--free terms without any use of the common jet mechanisms.
\end{abstract}

\maketitle

\section{Preface}\label{sec1}
We motivate the central concepts and informally describe the main task in order to give some impressions on our subject, however, this is not a~logical prerequisite for the text to follow.
\subsection{The higher--order transformations (the morphisms)}\label{ssec1.1}
Let us recall the \emph{jet coordinates}
\begin{equation}\label{eq1.1}
x_i,w^j_I\quad (j=1,\ldots,m;\, I=i_1\cdots i_r;\,i,i_1,\ldots,i_r=1,\ldots,n;\,r=0,1,\ldots\,)
\end{equation}
where $x_i$ are \emph{independent variables}, $w^j$ (empty $I$) \emph{dependent variables} and $w^j_I$ (nonempty $I$) correspond to the \emph{derivatives}
\[w^j_I=\frac{\partial w^j}{\partial x_I}=\frac{\partial^rw^j}{\partial x_{i_1}\cdots\partial x_{i_r}}\quad (I=i_1\cdots i_r).\]
Let us moreover introduce the equations
\begin{equation}\label{eq1.2}\bar x_i=X_i(\cdot\cdot,x_{i'},w^{j'}_{I'},\cdot\cdot), \bar w^j=W^j(\cdot\cdot,x_{i'},w^{j'}_{I'},\cdot\cdot)\quad (i=1,\ldots,n;\,j=1,\ldots,m)\end{equation}
where $X_i$ and $W^j$ are given functions of a~finite number of variables (\ref{eq1.1}). They are interpreted as the transformation formulae: the functions \[w^j=w^j(x_1,\ldots,x_n);\ j=1,\ldots,m;\] are transformed into certain functions \[\bar w^j=\bar w^j(\bar x_1,\ldots,\bar x_n);\ j=1,\ldots,m;\] and this is made as follows. Denoting
\[\mathcal X_i=X_i(\cdot\cdot,x_{i'},\frac{\partial w^{j'}}{\partial x_{I'}}(x_1,\ldots,x_n),\cdot\cdot)=\mathcal X_i(x_1,\ldots,x_n)\quad (i=1,\ldots,n),\]
we suppose that\[\det\left(\frac{\partial\mathcal X_{i'}}{\partial x_i}\right)=\det(D_i\mathcal X_{i'})\neq 0\qquad (D_i=\frac{\partial}{\partial x_i}+\sum w^j_{Ii}\frac{\partial}{\partial w^j_I}).\]
Then the implicit system $\bar x_i=\mathcal X_i(x_1,\ldots,x_n); i=1,\ldots,n;$ admits a~solution $x_i=\bar{\mathcal X_i}(\bar x_1,\ldots,\bar x_n); i=1,\ldots,n;$ and provides the desired result
\[\bar w^j=W^j(\cdot\cdot,\bar{\mathcal X_i},\frac{\partial w^{j'}}{\partial x_{I'}}(\bar{\mathcal X_1},\ldots,\bar{\mathcal X_n}),\cdot\cdot)=\bar w^j(\bar x_1,\ldots,\bar x_n)\qquad (j=1,\ldots,m).\]
One can also obtain the transformation of the derivatives
\begin{equation}\label{eq1.3}\bar w^j_I=\frac{\partial\bar w^j}{\partial\bar x_I}=W^j_I(\cdot\cdot,x_{i'},w^{j'}_{I'},\cdot\cdot)\qquad (\text{all }j\text{ and }I)\end{equation}
which complete the equations (\ref{eq1.2}). They satisfy the recurrence
\begin{equation}\label{eq1.4} \sum W^j_{Ii}D_iX_{i'}=D_{i'}W^j_I\qquad (\text{all }i',j,I)\end{equation}
and we altogether speak of a~\emph{morphism} (\ref{eq1.2}), (\ref{eq1.3}). If there exists the inverse morphism
\begin{equation}\label{eq1.5}x_i=\bar X_i(\cdot\cdot,\bar x_{i'},\bar w^{j'}_{I'},\cdot\cdot), w^j_I=\bar W^j_I(\cdot\cdot,\bar x_{i'},\bar w^{j'}_{I'},\cdot\cdot),\end{equation}
we speak of an~\emph{automorphism} (or: \emph{symmetry}). The totality of all symmetries unexpectedly manifests as an unheard--of mystery \cite{T2, T1}. Just the symmetries are important: they produce the higher--order equivalences of differential equations.

\subsection{Example: the wave construction}\label{ssec1.2}
Assuming $n=1,$ we abbreviate
\[x=x_1, \bar x=\bar x_1, w^j_r=w^j_{1\cdots 1}, \bar w^j_r=\bar w^j_{1\cdots 1}\qquad (r\text{ terms}).\]
Let $V=V(x,w^1_0,\ldots,w^m_0,\bar x,\bar w^1_0,\ldots,\bar w^m_0)$ be a~given function.
\\\\
{\bf Proposition} \cite{T4, T3}. \emph{If the implicit system
\[V=DV=\cdots =D^mV=0\qquad(D=\frac{\partial}{\partial x}+\sum w^j_{r+1}\frac{\partial}{\partial w^j_r})\]
admits a~solution
\begin{equation}\label{eq1.6}
\bar x=X(x,w^1_0,\ldots,w^m_m),\ \bar w^j_0=w^j_0(x,w^1_0,\ldots,w^m_m)\quad (j=1,\ldots,m)
\end{equation} such that $DX\neq 0$ and the implicit system
\[V=\bar DV=\cdots =\bar D^mV=0\qquad(\bar D=\frac{\partial}{\partial\bar x}+\sum \bar w^j_{r+1}\frac{\partial}{\partial\bar w^j_r})\] admits a~solution
\begin{equation}\label{eq1.7}
x=\bar X(\bar x,\bar w^1_0,\ldots,\bar w^m_m),\ w^j_0=\bar w^j_0(\bar x,\bar w^1_0,\ldots,\bar w^m_m)\quad (j=1,\ldots,m)
\end{equation} such that $\bar D\bar X\neq 0$ then $(\ref{eq1.6})$ and $(\ref{eq1.7})$ are symmetries inverse of each other.}

\bigskip

The Proposition can be generalized for the case $n>1$ and all the classical Lie contact transformations are involved if $m=1.$ Except for this Lie's favourable subcase with $m=1$ and arbitrary $n,$ all such symmetries destroy the finite--order jet spaces.

\subsection{The infinitesimal modification \cite{T6, T5, T7}}\label{ssec1.3}
The ancient ``linear approximation'' of equations (\ref{eq1.2}) and (\ref{eq1.3}) reads
\[\bar x_i=x_i+\varepsilon z_i(\cdot\cdot,x_{i'},w^{j'}_{I'},\cdot\cdot),\ \bar w^j_I=w^j_I+\varepsilon z^j_I(\cdot\cdot,x_{i'},w^{j'}_{I'},\cdot\cdot)\] where $\varepsilon$ is the famed ``small parameter''. In the rigorous theory, let us instead introduce the vector field
\begin{equation}\label{eq1.8}
Z=\sum z_i\frac{\partial}{\partial x_i}+\sum z^j_I\frac{\partial}{\partial w^j_I}\qquad (z^j_{Ii}=D_iz^j_I-\sum w^j_{Ii'}D_{i'}z_i)
\end{equation}
with the recurrence following from (\ref{eq1.4}).

\medskip

{\sl Warning}. In contrast to actual convention, we speak of a~\emph{variation} and the common term the \emph{generalized} (or: \emph{higher--order}, or: \emph{Lie--B\H{a}cklund}) \emph{infinitesimal transformation} (briefly: the \emph{symmetry}) is retained only for the case when the vector field $Z$ generates a~true (local) Lie group. The totality of all infinitesimal symmetries is unknown. 

\subsection{Example: variations and symmetries}\label{ssec1.4}
We again suppose $n=1.$ The vector field
\[D=Z=\frac{\partial}{\partial x}+\sum w^j_{r+1}\frac{\partial}{\partial w^j_r}\qquad (z_1=1,\, z^j_{1\cdots 1}=z^j_r=w^j_{r+1}\text{ with }r\text{ terms})\]
is clearly a~variation but not a~symmetry since the Lie system
\[\frac{\partial X}{\partial t}=z_1=1,\ \frac{\partial W^j_r}{\partial t}=W^j_{r+1}=DW^j_r\qquad (X|_{t=0}=x,W^j_r|_{t=0}=w^j_r)\]
for the corresponding Lie group
\[\bar x(t)=X(\cdot\cdot,x,w^{j'}_{r'},\cdot\cdot;t),\ \bar w^j_r(t)=W^j_r(\cdot\cdot,x,w^{j'}_{r'},\cdot\cdot;t)\quad (-\varepsilon<t<\varepsilon)\]
is contradictory. On the contrary, the vector field
\[Z=\sum w^1_{r+1}\frac{\partial}{\partial w^k_r}\qquad (m\geq 2; \text{the sum over } k=2,\ldots,m \text{ and }r=0,1,\ldots\;)\]
generates the very simple ``higher--order Lie group'' of the morphisms
\[\bar x(t)=x, \bar w^1_r(t)=w^1_r, \bar w^k_r(t)=w^k_r+tw^1_{r+1}\quad (k=2,\ldots, m;\,r=0,1,\ldots).\]
This group does not preserve many of the classical concepts, even the order of the differential equations.

\subsection{External differential equations \cite{T7}}\label{ssec1.5}
We denote $\mathbf M(m,n)$ the space equipped with coordinates (\ref{eq1.1}). Differential equations are traditionally interpreted as the subspace $\mathbf M\subset\mathbf M(m,n)$ defined by certain equations
\begin{equation}\label{eq1.9}D_{i_1}\cdots D_{i_r}f^k=0\qquad (k=1,\ldots,K;\,i_1,\ldots,i_r=1,\ldots,n;\,r=0,1,\ldots)\end{equation}
where $f^1,\ldots,f^K$ are given functions of variables (\ref{eq1.1}).

Let us recall the morphisms.  By using the more precise pull--back notation
\[\mathbf m^*x_i=\bar x_i, \mathbf m^*w^j_I=\bar w^j_I\] in the equations (\ref{eq1.2}) and (\ref{eq1.3}), they may be interpreted as a~mapping $\mathbf m: \mathbf M(m,n)\rightarrow\mathbf M(m,n),$ the \emph{morphism of the space} $\mathbf M(m,n).$
Assuming moreover $\mathbf m\mathbf M\subset\mathbf M,$ then $\mathbf m$ is said to be the \emph{external morphism} of $\mathbf M$ and the \emph{external automorphism} (or: \emph{external symmetry}) of $\mathbf M$ in the invertible case (\ref{eq1.5}).

The infinitesimal concepts are analogous: if the vector field (\ref{eq1.8}) is tangent to $\mathbf M$ (and therefore $Z$ is a~vector field on $\mathbf M$ as well) then $Z$ is called the \emph{external variation} of $\mathbf M$ and if $Z$ is moreover a~symmetry, we have the \emph{external infinitesimal symmetry} of $\mathbf M.$

\subsection{Internal differential equations \cite{T7}}\label{ssec1.6}
The external morphism of $\mathbf M$ whose restriction to $\mathbf M$ is invertible is the \emph{internal} symmetry. The external variation $Z$ of $\mathbf M$ which generates a~Lie group on $\mathbf M$ is the \emph{internal} infinitesimal symmetry. These internal concepts are in fact independent of the localization $\mathbf M$ in the ambient space $\mathbf M(m,n).$ The reasons are as follows.

Let $\Omega(m,n)$ be the module of all \emph{contact forms}
\begin{equation}\label{eq1.10} \omega=\sum f^j_I\omega^j_I\qquad (\text{finite sum}, \omega^j_I=dw^j_I-\sum w^j_{Ii}dx_i) \end{equation}
on the space $\mathbf M(m,n).$ One can observe that the recurrence (\ref{eq1.3}) is equivalent to the inclusion $\mathbf m^*\Omega(m,n)\subset\Omega(m,n).$ Analogously the recurrence in  (\ref{eq1.8}) is expressed by the inclusion $\mathcal L_Z\Omega(m,n)\subset\Omega(m,n)$ for the Lie derivative.

Let $\Omega$ be the restriction of the module $\Omega(m,n)$ to the subspace $\mathbf M\subset\mathbf M(m,n).$ The morphism $\mathbf m$ restricted to $\mathbf M$ clearly satisfies $\mathbf m^*\Omega\subset\Omega.$ Analogously the variation $Z$ restricted to $\mathbf M$ also satisfies the inclusion $\mathcal L_Z\Omega\subset\Omega.$ Therefore both concepts are characterized without the use of the ambient space $\mathbf M(m,n).$ However more is true: \emph{even the module $\Omega$ itself} can be characterized in abstract terms, we speak of a~\emph{diffiety} $\Omega$ on $\mathbf M,$ see below.

Altogether we obtain the \emph{internal} theory on $\mathbf M$ not affected by the space $\mathbf M(m,n).$

\subsection{Example: the internal symmetry}\label{ssec1.7}
Assuming $n=1,$ we introduce the subspace $\mathbf M\subset\mathbf M(m,1)$ defined by the equations
\[D^rw^j_2=w^j_{r+2}=0\qquad (j=1,\ldots,m;\,r=0,1,\ldots).\]
Then the morphism $\mathbf m:\mathbf M(m,1)\rightarrow\mathbf M(m,1)$ where
\[\mathbf m^*x=\bar x=x, \mathbf m^*w^j_r=\bar w^j_r=w^j_r+w^j_{r+1}\quad (j=1,\ldots,m;\,r=0,1,\ldots)\]
is clearly noninvertible but the restriction to $\mathbf M$
\[\mathbf m^*x=x, \mathbf m^*w^j_0=w^j_0+w^j_1, \mathbf m^*w^j_1=w^j_1\quad (j=1,\ldots,m)\]
is a~symmetry. Analogously the vector field $Z=D$ is a~mere variation on $\mathbf M(m,1)$ but generates the Lie group
\[\bar x(t)=x, \bar w^j_0(t)=w^j_0+tw^j_1, \bar w^j_1(t)=w^j_1\quad (j=1,\ldots,m)\]
on the space $\mathbf M.$ So we have the internal but not the external symmetries.

\subsection{Use of the Pfaffian equations}\label{ssec1.8}
For instance, let us mention the equation \[dz=pdx+F(x,y,z,p)dy\] where $x,y,z,p$ may (or may not) be regarded as the coordinates of the underlying space. The solutions $z=z(x,y)$ parametrized with $x,y$ satisfy \[\frac{\partial z}{\partial y}=F(x,y,z,\frac{\partial z}{\partial x}).\]
However \emph{the same} Pfaffian equation \[d\bar z=-xdp+F(x,y,\bar z+px,p)dy\quad (\bar z=z-px)\] admits the solutions $\bar z=\bar z(p,y)$ which satisfy \[\frac{\partial\bar z}{\partial y}=F(-\frac{\partial\bar z}{\partial p},y,\bar z-p\frac{\partial\bar z}{\partial p},p).\]
And quite analogously, \emph{the same} Pfaffian equation \[dy=\frac{1}{F}dz-\frac{p}{F}dx\quad (F\neq 0, F_p\neq 0)\] admits the solutions $y=y(z,x)$ which satisfy \[\frac{\partial y}{\partial x}=-p\frac{\partial y}{\partial z}\quad (F(x,y,z,p)=1/\frac{\partial y}{\partial z}\text{ determines }p=p(x,y,z,\frac{\partial y}{\partial z})).\]

{\sl We conclude.} A~Pfaffian equation represents many formally quite dissimilar but in fact equivalent differential equations according to the additional choice of the dependent and the independent variables. It follows that a~coordinate--free theory should be expressed in terms of the Pfaffian equations.

\subsection{Towards the diffieties}\label{ssec1.9}
Let us finally recall the subspace $\mathbf i:\mathbf M\subset\mathbf M(m,n).$ The definition equations (\ref{eq1.9}) imply that vector fields $D_1,\ldots, D_n$ are tangent to the subspace $\mathbf M$ and therefore may be regarded as vector fields on $\mathbf M$ as well. They satisfy the crucial identity
\[\mathcal L_{D_{i'}}\omega^j_I=\mathcal L_{D_{i'}}(dw^j_I-\sum w^j_{Ii}dx_i)=dw^j_{Ii'}-\sum w^j_{Iii'}dx_i=\omega^j_{Ii'}\]
which is clearly true even for the restrictions $\mathbf i^*\omega^j_I$ of the forms $\omega^j_I$ to the space $\mathbf M.$ Let $\Omega(m,n)_l\subset\Omega(m,n)$ be the submodule of all contact forms of the order $l$ at most, hence $|I|=r\leq l$ in all summands (\ref{eq1.9}). Obviously
\begin{equation}\label{eq1.11}\Omega(m,n)_l+\sum\mathcal L_{D_{i'}}\Omega(m,n)_l=\Omega(m,n)_{l+1}\qquad (\text{all }l)\end{equation}
and this property is true even for the restrictions to $\mathbf M$, i.e., for the submodules 
\begin{equation}\label{eq1.12}\Omega_l=\mathbf i^*\Omega(m,n)_l\subset\mathbf i^*\Omega(m,n)=\Omega.\end{equation}
We have in fact discovered the crucial property of the diffieties~$\Omega.$

\medskip

{\sl We conclude.} Differential equations should be represented by the Pfaffian system $\omega=0$ $(\omega\in\Omega)$ on the space $\mathbf M$ where the modules $\Omega$ are described in abstract terms (the \emph{internal theory}). Then the actual choice of the dependent and the independent variables is irrelevant (the \emph{absolute approach}).

\section{Fundamental concepts}\label{sec2}
We deal with smooth and local category of manifolds and mappings. Our notational convention for a~mapping $\mathbf m:\mathbf M\rightarrow\bar{\mathbf M}$ of manifolds allows the definition domain to be a~proper open subset of $\mathbf M.$ In order to delete the ``singular points,'' we moreover tacitly suppose the existence of bases in all modules to appear. Then the parade of the primary concepts denoted (I)--(VI) looks as follows.

\subsection*{(I) On the manifolds \cite{T6}}\label{ssec2.1}
Besides the occasional use of the common finite--dimensional spaces, we mainly deal with manifolds $\mathbf M$ modelled on $\mathbb R^\infty,$ i.e., there are coordinates $h^j:\mathbf M\rightarrow\mathbb R$ $(j=1,2,\ldots)$ such that the ring $\mathcal F\ (=\mathcal F(\mathbf M),$ the abbreviation whenever possible) of admissible functions $f:\mathbf M\rightarrow\mathbb R$ involves just the (smooth) composite functions $f=F(h^1,\ldots,h^{m(f)}).$
Then the $\mathcal F$--module $\Phi$ $(=\Phi(\mathbf M))$ of differential forms $\varphi=\sum f^jdg^j$ (finite sum with $f^j,g^j\in\mathcal F$) and the $\mathcal F$--module $\mathcal T\ (=\mathcal T(\mathbf M))$ of vector fields $Z$ on the space $\mathbf M$ make a~good sense. It should be noted that the vector fields are regarded as $\mathcal F$--linear functions $Z:\Phi\rightarrow\mathcal F$ where
\[df(Z)=Zf,\ \varphi(Z)=Z\rfloor\varphi=\sum f^jZg^j\]
and if $\varphi^1,\varphi^2,\ldots$ is a~basis of module $\Phi,$ we denote
\[Z=\sum z^j\frac{\partial}{\partial\varphi^j}\quad (\text{infinite series, arbitrary }z^j=\varphi^j(Z)\in\mathcal F)\]
with the common abbreviation $\partial/\partial f=\partial/\partial df.$ The familiar rules like
\[\mathcal L_Zf=Zf,\ \mathcal L_Z\varphi=Z\rfloor d\varphi+d\varphi(Z),\ \mathcal L_ZX=[Z,X],\ \mathcal L_{[X,Y]}=\mathcal L_X\mathcal L_Y-\mathcal L_Y\mathcal L_X\]
for the Lie derivative and the Lie bracket do not need any comment.

Let $\mathbf n:\mathbf N\rightarrow\mathbf M$ be a~mapping of manifolds. If an~appropriate part of the family of functions $\mathbf n^*h^1,\mathbf n^*h^2,\ldots$ can be taken for the coordinates on $\mathbf N,$ then $\mathbf n$ is called an \emph{inclusion} of the \emph{submanifold} $\mathbf N$ into the space $\mathbf M.$
(Since $\mathbf n$ is injective, we occasionally identify $\mathbf N=\mathbf n\mathbf N\subset\mathbf M$ with the subset of $\mathbf M.$)
Analogously $\mathbf n$ is called a~\emph{projection} of $\mathbf N$ on the \emph{factorspace} $\mathbf M$ if the family $\mathbf n^*h^1,\mathbf n^*h^2,\ldots$ can be completed by some additional functions to the coordinates on $\mathbf N.$
(Since $\mathbf n^*$ is injective, we occasionally abbreviate $f=\mathbf n^*f,\varphi=\mathbf n^*\varphi.$)

\subsection*{(II) On the diffieties \cite{T6, T8}}\label{ssec2.2}
For every submodule $\Omega\subset\Phi,$ let $\mathcal H(\Omega)\subset\mathcal T$ be the submodule of all vector fields $Z$ such that $\Omega(Z)=0.$ A~submodule $\Omega\subset\Phi$ is called \emph{flat} (or: \emph{satisfying the Frobenius condition}) if any of the equivalent requirements
\begin{equation}\label{eq2.1} d\Omega\cong 0\ (\text{mod }\Omega),\ \mathcal L_\mathcal H\Omega\subset\Omega,\ [\mathcal H,\mathcal H]\subset\mathcal H\qquad (\mathcal H=\mathcal H(\Omega))\end{equation} is satisfied. The finite--dimensional flat submodules are simple: they admit a~basis consisting of total differentials. We are however interested just in the infinite--dimensional case.
\begin{definition}\label{def2.1} A~finite--codimensional submodule $\Omega\subset\Phi$ is called a~\emph{diffiety} if there exists a~\emph{good filtration} $\Omega_*:\Omega_0\subset\Omega_1\subset\cdots\subset\Omega=\cup\Omega_l$ with the finite--dimensional submodules $\Omega_l\subset\Omega$ $(l=0,1,\ldots)$ such that
\begin{equation}\label{eq2.2}\mathcal L_\mathcal H\Omega_l\subset\Omega_{l+1}\quad (\text{all }l),\quad \Omega_l+\mathcal L_\mathcal H\Omega_l=\Omega_{l+1}\quad (l \text{ large enough}).\end{equation}
A~\emph{pre--diffiety} need not satisfy the codimensionality requirement. \end{definition}
We deal only with the diffieties unless otherwise stated. The pre--diffieties will appear later on and will be reduced to the common diffieties.
\begin{definition}\label{def2.2} Denoting $n\ (=n(\Omega))=\dim\Phi/\Omega=\dim\mathcal H,$ functions $x_1,\ldots,x_n$ are called \emph{independent variables} of diffiety $\Omega$ if the differentials $dx_1,\ldots,dx_n$ are linearly independent $\text{mod}\,\Omega.$ Then the \emph{total derivatives} $D_1,\ldots,D_n\in\mathcal H$ defined by
\[D_ix_i=1,\ D_{i'}x_i=0,\ \Omega(D_i)=0\qquad (i,i'=1,\ldots,n;\,i\neq i')\]
constitute a~basis of module $\mathcal H$ and the \emph{contact forms}
\[\omega\{f\}=df-\sum D_ifdx_i\quad (f\in\mathcal F)\]
generate the diffiety $\Omega.$ \end{definition}
The link to the classical approach can be succintly described as follows.

\medskip

Let us consider differential equations $\mathbf i:\mathbf M\subset\mathbf M(m,n)$ in the sense (\ref{eq1.9}). It was already clarified in Preface~\ref{ssec1.9} that the restriction $\Omega=\mathbf i^*\Omega(m,n)$ of the module $\Omega(m,n)$ to the space $\mathbf M$ is a~diffiety with the terms $\Omega_l=\mathbf i^*\Omega(m,n)_l$ of the good filtration. Let us conversely start with a~diffiety~$\Omega.$ Due to (\ref{eq2.2}), there exist generators
\[\mathcal L_{D_{i_1}}\cdots\mathcal L_{D_{i_r}}\omega^k\quad (k=1,\ldots,K;\,i_1,\ldots,i_r=1,\ldots,n;\,r=0,1,\ldots\,)\]
of module $\Omega.$ Denoting $\omega^k=\sum a^k_jdh^j,$ the Pfaffian system $\omega=0\ (\omega\in\Omega)$ clearly reads
\[\sum f^k_j\frac{\partial h^j}{\partial x_i}=0,\ D_{i_1}\sum f^k_j\frac{\partial h^j}{\partial x_i}=0,\ D_{i_1}D_{i_2}\sum f^k_j\frac{\partial h^j}{\partial x_i}=0,\ \ldots\]
which is a~classical (infinitely prolonged) system of differential equations for a~\emph{finite number} of functions $h^j=h^j(x_1,\ldots,x_n)$ occuring in the forms $\omega^1,\ldots,\omega^K.$

\medskip

{\sl We conclude.} A~diffiety supplied with a~fixed choice of the dependent and independent variables is the same as an~infinitely prolonged system of differential equations.

\subsection*{(III) On the commutative algebra \cite{T6, T8, T9, T10, T7}}\label{ssec2.3}
Conditions (\ref{eq2.2}) simplify if the original filtration $\Omega_*$ is replaced with the gradation
\[\mathcal M=\mathcal M_0\oplus\mathcal M_1\oplus\cdots\qquad(\mathcal M_l=\Omega_l/\Omega_{l-1},\ \Omega_{-1}=0).\]
Then the Lie derivative $\mathcal L_D$ turns into the $\mathcal F$--linear mapping $D:\mathcal M\rightarrow\mathcal M$ such that
\[D[\omega]=[\mathcal L_D\omega]\in\mathcal M_{l+1}\qquad (\omega\in\Omega_l, [\omega]\in\mathcal M_l, D\in\mathcal H)\]
where the square brackets denote the factorization. But more is true. Let
\[\mathcal A\ (=\mathcal A(\mathcal H))=\mathcal A_0\oplus\mathcal A_1\oplus\cdots\qquad (\mathcal A_0=\mathcal F, A_1=\mathcal H, \mathcal A_2=\mathcal H\odot\mathcal H,\ldots)\]
be the algebra of homogeneous polynomials over $\mathcal H.$ We obtain even the $\mathcal A$--module structure on $\mathcal M.$  For instance
\[D_{i_1}\cdots D_{i_r}[\omega]=[\mathcal L_{D_{i_1}}\cdots\mathcal L_{D_{i_r}}\omega]\in\mathcal M_{l+r}\qquad (\omega\in\Omega_r, D_{i_1}\cdots D_{i_r}\in\mathcal A_r),\]
\[D_{i_1}\cdots D_{i_r}[\omega\{f\}]=[\omega\{D_{i_1}\cdots D_{i_r}f\}].\]

\medskip

{\sl Warning.} The algebraical calculations with the $\mathcal F$--module are performed at a~fixed point of $\mathbf M.$ So we deal with ``smooth families'' of $\mathbb R$--modules. It follows that the classical algebra can be applied.

\bigskip

In particular we recall the \emph{Hilbert polynomial}
\[\dim\mathcal M_l=e_\nu\binom l\nu+\cdots+e_0\binom l0\qquad (l\text{ large enough, } e_\nu\neq 0)\]
with integer coefficients, alternatively
\[\dim\Omega_l=e_\nu\binom{l+1}{\nu+1}+\cdots +e_{-1}\binom{l+1}0\qquad  (l\text{ large enough, } e_\nu\neq 0).\]
Then the degree $\nu\ (=\nu(\Omega))$ and the integer $\mu\ (=\mu(\Omega))=e_\nu>0$ do not depend on the choice of the filtration $\Omega_*.$
\begin{definition}\label{def2.3} We claim that \emph{the solution} of diffiety $\Omega$ \emph{depends} on $\mu(\Omega)$ functions of $\nu(\Omega)+1$ variables. Diffiety $\Omega$ is \emph{overdetermined, determined, or underdetermined} according to whether $\nu+1<n-1, \nu+1=n-1,$ or $\nu+1>n-1,$ respectively. \end{definition}
One can observe that we have not yet introduced the concept of a~solution of diffiety~$\Omega.$ Definition~\ref{def2.3} therefore designates a~more \emph{formal property} of diffiety~$\Omega,$ however, it is in full accordance with quite opposite approach~\cite{T12}--\cite{T17} in the theory of exterior differential systems.

\medskip

{\sl Remark.} The above reasonings make sense even for the much easier case of the finite--dimensional underlying space $\mathbf M$ which need not be separately discussed here. Let us only recall the familiar Frobenius theorem which ensures that then the diffiety $\Omega\subset\Phi(\mathbf M)$ has a~basis $df^1,\ldots,df^\mu$ and the solution depends on $\mu=\dim\Omega=e_{-1}$ parameters. We formally put $\nu\ (=\nu(\Omega))=-1.$

\subsection*{(IV) On the symmetries \cite{T6, T7}}\label{ssec2.4}
Admissible mappings $\mathbf m:\mathbf M\rightarrow\bar{\mathbf M}$ between manifolds satisfy $\mathbf m^*\mathcal F(\bar{\mathbf M})\subset\mathcal F(\mathbf M)$ whence $\mathbf m^*\Phi(\bar{\mathbf M})\subset\Phi(\mathbf M).$
\begin{definition}\label{def2.4} Let $\Omega\subset\Phi(\mathbf M)$ and $\bar\Omega\subset\Phi(\bar{\mathbf M})$ be diffieties. Then the mapping $\mathbf m:\mathbf M\rightarrow\bar{\mathbf M}$ is said to be a~\emph{morphism of diffieties} if $\mathbf m^*\bar\Omega\subset\Omega.$ Invertible morphisms are \emph{isomorphisms} of diffieties. Assuming $\mathbf M=\bar{\mathbf M}$ and $\Omega=\bar\Omega,$ invertible morphisms are called \emph{symmetries} (or: \emph{automorphisms}).
\end{definition}
Three subcases of symmetries can be informally mentioned as follows. First, if a~\emph{given} filtration is preserved (Figure~1a). Second, if an~\emph{unknown} filtration is preserved (Figure~1b). Third, if no \emph{finite--dimensional subspace} of $\Omega$ is preserved (Figure~1c).

\medskip

\begin{picture}(120,100)(0,-30)
\put(10,50){\line(1,0){10}}\put(20,50){\line(0,-1){55}}\put(5,55){$\Omega_0$}
\qbezier(20,40)(-10,25)(20,10)\put(18,11){\vector(2,-1){1}}
\put(40,50){\line(1,0){10}}\put(50,50){\line(0,-1){55}}\put(40,55){$\Omega_1$}
\qbezier(50,40)(20,25)(50,10)\put(48,11){\vector(2,-1){1}}
\put(70,50){\line(1,0){10}}\put(80,50){\line(0,-1){53}}\put(70,55){$\Omega_2$}
\qbezier(80,40)(50,25)(80,10)\put(78,11){\vector(2,-1){1}}
\put(95,55){$\ldots$}
\put(45,-15){(1a)}
\end{picture}
\begin{picture}(120,100)(0,-30)
\put(10,50){\line(1,0){10}}\put(20,50){\line(0,-1){55}}\put(5,55){$\Omega_0$}
\qbezier[50](10,10)(44,29)(78,48)\put(10,10){\line(-1,1){10}}
\qbezier(60,38)(37,55)(30,21)\put(30.5,23){\vector(-1,-2){1}}
\put(40,50){\line(1,0){10}}\put(50,50){\line(0,-1){55}}\put(40,55){$\Omega_1$}
\qbezier[50](30,-5)(66,15)(96,33)\put(30,-5){\line(-1,1){10}}
\qbezier(85,26)(62,45)(55,9)\put(55,10){\vector(-1,-2){1}}
\put(70,50){\line(1,0){10}}\put(80,50){\line(0,-1){53}}\put(70,55){$\Omega_2$}
\put(95,55){$\ldots$}
\put(45,-15){(1b)}
\put(30,-35){Figure 1.}
\end{picture}
\begin{picture}(100,100)(0,-30)
\put(10,50){\line(1,0){10}}\put(20,50){\line(0,-1){55}}\put(5,55){$\Omega_0$}
\qbezier[50](10,10)(44,29)(78,48)\put(10,10){\line(-1,1){10}}
\qbezier(20,30)(9,35)(13,13)\put(13,12.5){\vector(0,-1){1}}
\put(40,50){\line(1,0){10}}\put(50,50){\line(0,-1){55}}\put(40,55){$\Omega_1$}
\qbezier(50,20)(39,25)(43,3)\put(43,2.5){\vector(0,-1){1}}
\qbezier[50](30,-5)(66,15)(96,33)\put(30,-5){\line(-1,1){10}}
\put(65,55){$\ldots$}
\put(35,-15){(1c)}
\end{picture}

\bigskip\medskip

The common methods of the general equivalence \cite{T17} can be directly applied only to the subcase (1a) and with slight adaptations even to the subcase (1b) not occuring in the classical theory. Alas, the common methods fail in the subcase (1c). No universal finite algorithm for the determination of \emph{all} symmetries or equivalences of diffieties is known.

\subsection*{(V) On the variations \cite{T6, T7}}\label{ssec2.5}
We expect that the determination of ``approximative symmetries'' is easier. They are realized by vector fields.
\begin{definition}\label{def2.5} \emph{Variations} $Z\in\mathcal T(\mathbf M)$ of a~diffiety $\Omega$ are defined by the condition $\mathcal L_Z\Omega\subset\Omega.$ Variations generating a~Lie group are called (\emph{infinitesimal}) \emph{symmetries}.
\end{definition}
The Figure~1 with arrows denoting the Lie derivative $\mathcal L_Z$ can be mentioned as well, however, the comments are quite other than in the previous case of the symmetries~$\mathbf m.$ In more detail, the subcases (1a) and (1b) concern the variations $Z$ which generate a~group, that is, we have the infinitesimal symmetries. Then, a~somewhat paradoxically, the ``true variations'' of the subcase (1c) do not cause more difficulties.
\begin{lemma}\label{l2.1} A~variation $Z$ is characterized by the property
\begin{equation}\label{eq2.3} (\mathcal L_D\omega)(Z)=D\omega(Z)\qquad (D\in\mathcal H, \omega\in\Omega).
\end{equation}
\end{lemma}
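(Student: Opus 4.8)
The plan is to unwind the definitions and exploit the algebraic identity for the Lie derivative of a form evaluated on a vector field. Recall that for any $\omega \in \Phi$ and any $Z, D \in \mathcal{T}$ we have the Cartan formula $\mathcal{L}_D\omega = D\rfloor d\omega + d(\omega(D))$, and consequently the general identity $(\mathcal{L}_D\omega)(Z) - D(\omega(Z)) = d\omega(D,Z) + Z(\omega(D))$ (up to the usual sign conventions; one verifies this by expanding both Lie derivatives via Cartan's formula and using $d(\omega(D))(Z) = Z(\omega(D))$). The point is that the right-hand side involves only $d\omega$ and the contraction $\omega(D)$, objects which are controlled once we know $\omega \in \Omega$ and $D \in \mathcal{H} = \mathcal{H}(\Omega)$.

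First I would establish the forward implication. Suppose $Z$ is a variation, i.e.\ $\mathcal{L}_Z\Omega \subset \Omega$. Fix $D \in \mathcal{H}$ and $\omega \in \Omega$. Since $D \in \mathcal{H}(\Omega)$ we have $\omega(D) = 0$, so the term $Z(\omega(D))$ in the identity above vanishes identically, and similarly $(\mathcal{L}_D\omega)(Z)$ must be examined: because $\Omega$ is flat (a diffiety satisfies the Frobenius condition, so $\mathcal{L}_{\mathcal{H}}\Omega \subset \Omega$ by \eqref{eq2.1}), we have $\mathcal{L}_D\omega \in \Omega$, and since $d\omega \equiv 0 \pmod{\Omega}$ we can write $d\omega = \sum \alpha_k \wedge \omega_k$ with $\omega_k \in \Omega$. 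Then $d\omega(D,Z) = \sum(\alpha_k(D)\omega_k(Z) - \alpha_k(Z)\omega_k(D)) = \sum \alpha_k(D)\,\omega_k(Z)$ since $\omega_k(D) = 0$. Now I would use that $\omega_k(Z) = (\mathcal{L}_Z$-adjusted$)$ — more directly: evaluate the identity the other way. The cleanest route is to write $(\mathcal{L}_D\omega)(Z) - D(\omega(Z))$, substitute $\omega(D) = 0$ and $d\omega = \sum \alpha_k \wedge \omega_k$, and observe the remaining expression is a combination of terms $\omega_k(Z)$; but these need not vanish, so the equality \eqref{eq2.3} is \emph{not} automatic from flatness alone — it genuinely uses $\mathcal{L}_Z\omega \in \Omega$ to control the cross terms. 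The honest computation: from the identity, $(\mathcal{L}_D\omega)(Z) - D(\omega(Z)) = d\omega(D,Z)$ (using $\omega(D)=0$), and separately $d(\omega(Z))$ compared with $(\mathcal{L}_Z\omega)(D)$; combining, $d\omega(D,Z) = (\mathcal{L}_Z\omega)(D) - Z(\omega(D)) - ($reindexed$) = (\mathcal{L}_Z\omega)(D)$ since $\omega(D)=0$. Because $\mathcal{L}_Z\omega \in \Omega$ and $D \in \mathcal{H}(\Omega)$, we get $(\mathcal{L}_Z\omega)(D) = 0$, hence $d\omega(D,Z) = 0$, which yields \eqref{eq2.3}.

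For the converse, assume \eqref{eq2.3} holds for all $D \in \mathcal{H}$ and all $\omega \in \Omega$. I would show $\mathcal{L}_Z\omega \in \Omega$ for each $\omega \in \Omega$ by verifying that $\mathcal{L}_Z\omega$ annihilates $\mathcal{H}$; indeed a form $\psi \in \Phi$ lies in $\Omega$ precisely when $\psi(D) = 0$ for all $D \in \mathcal{H}$, because $\Omega$ has finite codimension $n = \dim \mathcal{H}$ and $\Omega = \{\psi : \psi(\mathcal{H}) = 0\}$ follows from the duality $\mathcal{H} = \mathcal{H}(\Omega)$ together with the bases assumption. Running the identity of the previous paragraph in reverse: $(\mathcal{L}_Z\omega)(D) = d\omega(Z,D) + Z(\omega(D)) = d\omega(Z,D) = -d\omega(D,Z) = D(\omega(Z)) - (\mathcal{L}_D\omega)(Z) = 0$ by hypothesis \eqref{eq2.3}. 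Hence $\mathcal{L}_Z\omega$ kills every $D \in \mathcal{H}$, so $\mathcal{L}_Z\omega \in \Omega$, and $Z$ is a variation.

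I expect the main subtlety to be bookkeeping of the Cartan-calculus identity relating $(\mathcal{L}_D\omega)(Z)$, $D(\omega(Z))$ and $d\omega(D,Z)$ with the correct signs, and the clean use of the two structural facts: that $D \in \mathcal{H}$ means $\omega(D) = 0$ for $\omega \in \Omega$ (killing all the "extra" terms), and that $\Omega = \{\psi \in \Phi : \psi(\mathcal{H}) = 0\}$ (so that membership in $\Omega$ is detected by evaluation on $\mathcal{H}$). The flatness of $\Omega$ from \eqref{eq2.1} is what guarantees the duality $\mathcal{H}(\Omega)$ is "large enough" and need not be invoked beyond that. No hard analysis is involved; the lemma is essentially a reformulation lemma, and the work is entirely in choosing the right identity and reading off both directions from it.
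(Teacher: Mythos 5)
Your overall strategy is sound and, once the bookkeeping is repaired, it amounts to the paper's own argument in Cartan--formula clothing; but several identities you actually write down are false as stated, and both directions only come out right because the errors compensate. Concretely: the ``general identity'' $(\mathcal L_D\omega)(Z)-D(\omega(Z))=d\omega(D,Z)+Z(\omega(D))$ is not an identity --- Cartan's formula gives $(\mathcal L_D\omega)(Z)=d\omega(D,Z)+Z(\omega(D))$, with no $-D(\omega(Z))$ on the left. Consequently, in the forward direction your pivotal conclusion ``$d\omega(D,Z)=0$'' is wrong: from $(\mathcal L_Z\omega)(D)=d\omega(Z,D)+D(\omega(Z))$ and $(\mathcal L_Z\omega)(D)=0$ one gets $d\omega(D,Z)=D(\omega(Z))$, which is generally nonzero (take $\omega=\omega^j_s$ in a jet diffiety and $Z$ a variation with $\omega^j_s(Z)=D^sp^j$); the equality \eqref{eq2.3} follows by combining $d\omega(D,Z)=D(\omega(Z))$ with $(\mathcal L_D\omega)(Z)=d\omega(D,Z)$ (valid because $\omega(D)=0$), not from any vanishing of $d\omega(D,Z)$. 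Similarly, in the converse the first link of your chain should read $(\mathcal L_Z\omega)(D)=d\omega(Z,D)+D(\omega(Z))$, not $+Z(\omega(D))$; with that correction the chain $(\mathcal L_Z\omega)(D)=-d\omega(D,Z)+D(\omega(Z))=D(\omega(Z))-(\mathcal L_D\omega)(Z)=0$ is valid, and your appeal to the duality $\Omega=\{\psi:\psi(\mathcal H)=0\}$ (which rests on the finite codimension and the bases assumption, not on flatness) correctly finishes that direction.

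After these repairs your proof is correct and is in substance the paper's, which is shorter because it works with the Leibniz rule instead of $d\omega$: from $0=Z(\omega(D))=(\mathcal L_Z\omega)(D)+\omega([Z,D])$ one reads off that $\mathcal L_Z\Omega\subset\Omega$ is equivalent to $[Z,\mathcal H]\subset\mathcal H$, and then $D(\omega(Z))=(\mathcal L_D\omega)(Z)+\omega([D,Z])$ yields \eqref{eq2.3} together with its converse in one stroke. Your detour through $d\omega(D,Z)$ encodes exactly the same information, so nothing is gained by it, and as written the intermediate steps would not survive a line-by-line check; please restate the two Cartan/Leibniz identities correctly before relying on them.
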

\begin{proof}
Inclusion $\mathcal L_Z\Omega\subset\Omega$ is equivalent to the identity $\mathcal L_Z\mathcal H=[Z,\mathcal H]\subset\mathcal H$ which follows from the equations
\[0=\Omega(\mathcal H),\ 0=Z(\Omega(\mathcal H))=\mathcal L_Z\Omega(\mathcal H)+\Omega([Z,\mathcal H]).\]
So we have \\
\[D(\omega(Z))=\mathcal L_D\omega(Z)+\omega([D,Z])=\mathcal L_D\omega(Z).\]\end{proof}
This simple Lemma~\ref{l2.1} involves the common \emph{linearization procedure} \cite{T13,T14} as a~particular subcase when $\omega=\omega\{f\}.$ If a~\emph{variation} $Z$ is represented by the series (\ref{eq2.1}) then the coefficients $\varphi^j(Z)$ with appropriately chosen forms $\varphi^j\in\Omega$ can be effectively described.
On the contrary, the study of the \emph{symmetries} $Z$ is rather difficult: they satisfy one additional condition, the invariance of some filtration. In general, there are too many filtrations and this prevents us from resolving the symmetry problem completely.

\subsection*{(VI) On the evolutional diffieties}\label{ssec2.6}
The infinitesimal symmetry $Z\in\mathcal T(\mathbf M)$ of a~diffiety $\Omega\subset\Phi(\mathbf M)$ is a~classical concept with simple geometrical significance, the flow on the underlying space~$\mathbf M.$ The variations $Z$ look rather ambiguously in this respect, they are rather arguably identified with virtual flows on the vague space of solutions of diffiety $\Omega$ \cite{T13, T14}.
A~rigorous view is however possible \cite{T6}.

Let us introduce the direct product $\bar{\mathbf M}=\mathbf M\times\mathbb R$ of manifolds with coordinate~$t$ in the factor $\mathbb R.$ Omitting the technicalities, a~function $f\in\Phi(\mathbf M)$ can be regarded as a~function on~$\bar{\mathbf M}$ (independent of $t$) and analogously for the forms $\varphi\in\Phi(\mathbf M).$ If $\varphi^1,\varphi^2,\ldots$ is a~basis of module $\Phi(\mathbf M)$ then $dt,\varphi^1,\varphi^2,\ldots$ constitute a~basis of $\Phi(\bar{\mathbf M}).$ 
The vector fields $\bar Z\in\mathcal T(\bar{\mathbf M})$ can be described as follows.
There are \emph{horizontal} vector fields $H\in\mathcal T(\bar{\mathbf M})$ satisfying $Ht=0$ and they may be identified with vector fields $H(t)\in\mathcal T(\mathbf M)$ depending on parameter~$t$ which are distributed over $\bar{\mathbf M}$  by the $t$--shifts along the $\mathbb R$ component.
Then, by using the obvious \emph{vertical} vector field $\partial/\partial t,$ we obtain the unique decomposition
\[\bar Z=H+\bar f\frac{\partial}{\partial t}\quad (\bar f=\bar Zt, Ht=0, Hf=H(t)f \text{ where }f\in\mathcal F(\mathbf M)\subset\mathcal F(\bar{\mathbf M}))\] into the horizontal and the vertical summands.

With this preparation, the following diffieties of rather special kind provide the rigorous geometrical sense of the variations.

\begin{definition}\label{def2.6} Let $Z(t)\in\mathcal T(\mathbf M)$ be a~variation depending on parameter~$t$ of a~diffiety $\Omega\subset\Phi(\mathbf M).$ We introduce the \emph{evolutional diffiety} $\bar\Omega\subset \Phi(\mathbf M\times\mathbb R)$ with generators
\begin{equation}\label{eq2.4} \omega(Z(t))dt-\omega\in\bar\Omega\subset\Phi(\bar{\mathbf M})\qquad (\omega\in\Omega).\end{equation}
Alternatively, the module $\mathcal H(\bar\Omega)$ is generated by the vector fields
\begin{equation}\label{eq2.5} E=Z(t)+\frac{\partial}{\partial t}\in\mathcal H(\bar\Omega)\subset\mathcal T(\bar{\mathbf M})\text{ and } D\in\mathcal H(\Omega)\end{equation}
where the vector fields $D\in\mathcal H(\Omega)$ are identified with horizontal vector fields distributed over $\mathbf M\times\mathbb R.$ \end{definition}
\begin{definition}\label{def2.7} 
An~inclusion $\mathbf n:\mathbf N\rightarrow\mathbf M$ $(\mathbf N\subset\mathbb R^n, n=n(\Omega))$ is said to be a~\emph{solution} of diffiety $\Omega\subset\Phi(\mathbf M)$ if $\mathbf n^*\Omega=0.$
\end{definition}
In accordance with tradition, we informally identify $\mathbf N=\mathbf n\mathbf N$ which better corresponds to the intuition: the diffiety $\Omega$ identically vanishes on the subspace $\mathbf N=\mathbf n\mathbf N\subset\mathbf M$ of the total space or, equivalently, all vector fields $D\in\mathcal H(\Omega)$ are tangent to this subspace $\mathbf N=\mathbf n\mathbf N\subset\mathbf M$ of the dimension $n=n(\Omega).$ We recall that the existence of solutions is a~highly delicate task \cite{T12,T11,T16}, to put it mildly.

\bigskip

The resulting point is as follows. Let $\bar{\mathbf N}\subset\bar{\mathbf M}$ be a~solution of the evolutional diffiety $\bar\Omega,$ hence $\dim\bar{\mathbf N}=\dim\mathcal H(\bar\Omega)$ and vector fields $\bar D\in\mathcal H(\bar\Omega)$ are tangent to $\bar{\mathbf N}.$ Let $\mathbf N\subset\mathbf M$ be the projection of $\bar{\mathbf N}.$ Since the vector fields $D$ distributed over $\bar{\mathbf M}$ are tangent to $\bar{\mathbf N},$ we conclude that the projections of the level subsets $t=const.$ of $\bar{\mathbf N}$ on $\mathbf N$ are solutions of $\Omega.$ On the other hand, $E$ is tangent to $\bar{\mathbf N}$ as well and generates a~one--parameter group on $\bar{\mathbf N}$ where the $\mathbb R$--component involves mere translations $t\rightarrow t+c.$ It follows that the level sets on $\bar{\mathbf N}$ are permuted and the projections of the level sets in $\mathbf N,$ the solutions of $\Omega,$ are permuted as well. 

We summarize: a~variation (possibly depending on a~parameter) generates many flows, but each only on a~rather narow family of solutions of diffiety $\Omega.$
\begin{prop}\label{th2.1} Let $\bar{\mathbf N}\subset\bar{\mathbf M}=\mathbf M\times\mathbb R$ be a~solution of the evolutional diffiety $\bar\Omega\subset\Phi(\bar{\mathbf M)}$ and $\mathbf N\subset\mathbf M$ the natural projection of $\bar{\mathbf N}.$ Then the level subsets $t=const.$ of $\bar{\mathbf N}$ are projected on the solutions of diffiety $\Omega\subset\Phi(\mathbf M).$ The vector field $E\in\mathcal T(\bar{\mathbf M})$ generates a~Lie group on $\bar{\mathbf N}$ and its projection $Z(t)\in\mathcal T(\mathbf M)$ permutes the solutions of $\Omega$ contained in $\mathbf N.$
\end{prop}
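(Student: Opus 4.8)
The plan is to reduce everything to elementary facts about a vector field tangent to a \emph{finite}--dimensional submanifold, once the dimension count for $\bar\Omega$ is in place. First I would record that, by the description (\ref{eq2.5}) of $\mathcal H(\bar\Omega)$, a basis of this module consists of $E=Z(t)+\partial/\partial t$ together with the horizontal lifts of a basis $D_1,\dots,D_n$ ($n=n(\Omega)$) of $\mathcal H(\Omega)$, and these are linearly independent since only $E$ has a nonzero $\partial/\partial t$--component; hence $n(\bar\Omega)=\dim\mathcal H(\bar\Omega)=n+1$ and, by Definition~\ref{def2.7}, $\bar{\mathbf N}$ is an $(n+1)$--dimensional submanifold to which $E$ and the lifts of the $D_i$ are all tangent. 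Since $Et=1$ and $E$ is tangent to $\bar{\mathbf N}$, the restriction of $t$ to $\bar{\mathbf N}$ is a submersion, so the level sets $\bar{\mathbf N}_c=\bar{\mathbf N}\cap\{t=c\}$ are $n$--dimensional; and because $t$ is constant on $\bar{\mathbf N}_c$ the natural projection $\pi\colon\mathbf M\times\mathbb R\to\mathbf M$ restricts there to an inclusion, with image an $n$--dimensional submanifold $\mathbf N_c\subset\mathbf N$.

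Next I would prove that each $\mathbf N_c$ solves $\Omega$. For $\omega\in\Omega$ the generator $\omega(Z(t))\,dt-\omega$ of $\bar\Omega$ pulls back to zero on the solution $\bar{\mathbf N}$; restricting further to $\bar{\mathbf N}_c$, where $dt=0$, leaves $\omega|_{\bar{\mathbf N}_c}=0$, hence $\omega|_{\mathbf N_c}=0$ after the identification by $\pi$. Since $\omega$ is arbitrary and $\dim\mathbf N_c=n(\Omega)$, Definition~\ref{def2.7} gives that $\mathbf N_c$ is a solution of $\Omega$; this is the first assertion. For the statement about $E$, the point is simply that $\bar{\mathbf N}$ is finite--dimensional and $E$ is tangent to it, so $E$ restricts to an honest vector field on $\bar{\mathbf N}$ and therefore, by the ordinary existence--and--uniqueness theorem for ODEs on finite--dimensional manifolds, generates a local one--parameter group $\phi_s$ on $\bar{\mathbf N}$; this is exactly the gain of passing from the infinite--dimensional $\bar{\mathbf M}$, on which $E$ need not integrate, to the solution.

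Finally, integrating $Et=1$ gives $t(\phi_s(p))=t(p)+s$, so $\phi_s$ carries $\bar{\mathbf N}_c$ diffeomorphically onto $\bar{\mathbf N}_{c+s}$; projecting by $\pi$ yields diffeomorphisms $\mathbf N_c\to\mathbf N_{c+s}$ between solutions of $\Omega$, all contained in $\mathbf N=\pi(\bar{\mathbf N})=\bigcup_c\mathbf N_c$, and since $\pi_\ast E=\pi_\ast(Z(t)+\partial/\partial t)=Z(t)$, the velocity of the projected flow over the level $t=c$ is $Z(c)$; this realizes the claim that $Z(t)$ permutes the solutions of $\Omega$ contained in $\mathbf N$.

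I do not expect a deep obstacle: the substance of the Proposition is precisely the finite--dimensionality of $\bar{\mathbf N}$, which makes the flow of $E$ available for free. The only points needing genuine care are the dimension count $n(\bar\Omega)=n(\Omega)+1$ (so that $\bar{\mathbf N}$ has the right size and $t$ serves as a coordinate along the $E$--direction) and checking that $\pi$ restricted to a fibre $\bar{\mathbf N}_c$ is an inclusion in the technical sense of part (I), so that $\mathbf N_c$ is a bona fide solution and not merely a subset on which $\Omega$ vanishes. Everything else is bookkeeping with the Lie--derivative and contraction identities already collected in the excerpt.
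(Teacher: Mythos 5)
Your argument is correct and follows essentially the same route as the paper's own discussion preceding the Proposition: the level subsets $t=const.$ of $\bar{\mathbf N}$ project onto solutions of $\Omega$, and the finite--dimensionality of the solution $\bar{\mathbf N}$ is exactly what allows $E$ to integrate to a local one--parameter group whose $\mathbb R$--component is the translation $t\rightarrow t+c$, permuting the level sets and hence their projections. The only cosmetic difference is that you check the first assertion by restricting the form generators (\ref{eq2.4}) to the sets where $dt=0$, whereas the paper invokes the equivalent tangency description (\ref{eq2.5}) via the distributed horizontal fields $D$; your version merely makes explicit the dimension count $n(\bar\Omega)=n(\Omega)+1$ and the inclusion technicality that the paper leaves implicit.
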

The multi--parameter evolution diffieties for the case of a~\emph{finite--dimensional} Lie algebra of variations $Z$ can be analogously introduced as well.

\section{The involutiveness}\label{sec3}
In the classical external theory, the involutivity ensures that a~given finite system of differential equations is compatible. In the classical internal theory, the involutivity ensures the same for a~finite Pfaffian system. We are interested in diffieties~$\Omega$ where the compatibility is already attained. Then the involutivity describes the structure of the higher--order summands of the $\mathcal A$--module $\mathcal M$ corresponding to a~\emph{given good filtration}~$\Omega_*$ and this is a~pure algebra.

Let $Z_1,\ldots, Z_n$ $(n=n(\Omega))$ be a~basis of module $\mathcal H\, (=\mathcal H(\Omega))$ and $\mathcal A(i)\subset\mathcal A$ $(i=0,\ldots,n)$ the ideal generated by  $Z_1,\ldots, Z_i.$ In particular $\mathcal A(0)=0$ and
\[\mathcal A(n)=\mathcal A_1\oplus\mathcal A_2\oplus\cdots=\mathfrak m\subset\mathcal A\qquad (\mathcal A_1=\mathcal H, \mathcal A_2=\mathcal H\odot\mathcal H, \ldots)\] is the \emph{maximal ideal}. We introduce the factormodules
\[\mathcal M(i)=\mathcal M/\mathcal A(i)\mathcal M=\mathcal M(i)_0\oplus\mathcal M(i)_1\oplus\cdots\quad (\mathcal M(i)_l=\mathcal M_l/\mathcal A(i)\mathcal M\cap\mathcal M_l)\] which are $\mathcal A$--modules as well. In particular
\begin{equation}\label{eq3.1} Z_{i+1}:\mathcal M(i)_l\rightarrow\mathcal M(i)_{l+1}\qquad (i=0,\ldots,n-1).\end{equation}
\begin{definition}\label{def3.1} The basis $Z_1,\ldots, Z_n$ is called \emph{ordinary} (or: \emph{generic} \cite{T6}) for a~given good filtration $\Omega_*$ if (\ref{eq3.1}) are injections for $l$ large enough. \end{definition}
\begin{theorem}[\cite{T6,T8}]\label{th3.1} There exists the ordinary basis of $\mathcal H.$ \end{theorem}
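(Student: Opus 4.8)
The plan is to argue that ``ordinariness'' is a generic condition on the choice of basis $Z_1,\dots,Z_n$ of $\mathcal{H}$, so that a suitable basis can always be found by successive generic choices. Throughout we fix the good filtration $\Omega_*$ and work with the associated graded $\mathcal{A}$--module $\mathcal{M}=\bigoplus\mathcal{M}_l$. Recall that, by the Warning in (III), all the algebra is performed fibrewise at a point of $\mathbf{M}$, so we are dealing with a finitely generated graded module over the polynomial ring $\mathcal{A}=\mathcal{F}[\mathcal{H}]$ with $\mathcal{H}$ an $\mathcal{F}$--module of rank $n$; the classical commutative algebra of generic hyperplane sections (linear systems of parameters, superficial elements) applies. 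The point to exploit is that for $l$ large the map $Z_{i+1}\colon\mathcal{M}(i)_l\to\mathcal{M}(i)_{l+1}$ is the multiplication-by-a-linear-form map on the graded module $\mathcal{M}(i)=\mathcal{M}/\mathcal{A}(i)\mathcal{M}$, and its injectivity in high degree is exactly the statement that the image of $Z_{i+1}$ in $\mathcal{A}/\mathcal{A}(i)$ is a non-zerodivisor on $\mathcal{M}(i)$ modulo its finite-length submodule, i.e. $Z_{i+1}$ avoids all the positive-degree associated primes of $\mathcal{M}(i)$.

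The key steps, carried out by induction on $i=0,1,\dots,n-1$, are as follows. First I would set $\mathcal{M}(0)=\mathcal{M}$ and observe that $\mathcal{M}$ is a finitely generated graded $\mathcal{A}$--module (the $\Omega_l$ are finite-dimensional and generate), hence has finitely many associated primes; only the irrelevant maximal ideal $\mathfrak{m}$ among them can contain all of $\mathcal{H}$, and it accounts precisely for the finite-length part of $\mathcal{M}$, which lives in bounded degree. Second, assuming $Z_1,\dots,Z_i$ have been chosen so that (\ref{eq3.1}) are injective in high degree for the first $i$ steps, I would pass to $\mathcal{M}(i)=\mathcal{M}/\mathcal{A}(i)\mathcal{M}$, list its finitely many associated primes $\mathfrak{p}_1,\dots,\mathfrak{p}_s$, and discard those equal to $\mathfrak{m}$; each remaining $\mathfrak{p}_k$, being a proper homogeneous prime, meets the rank-$n$ space $\mathcal{H}$ (more precisely its image in $\mathcal{A}_1$) in a proper $\mathcal{F}$--submodule $\mathcal{H}\cap\mathfrak{p}_k\subsetneq\mathcal{H}$. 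By prime avoidance over the infinite residue field $\mathcal{F}$ (working fibrewise, $\mathbb{R}$ is infinite), there is $Z_{i+1}\in\mathcal{H}$ lying in none of these finitely many proper subspaces; such a $Z_{i+1}$ is a non-zerodivisor on $\mathcal{M}(i)/(\text{finite-length part})$, which is exactly the assertion that $Z_{i+1}\colon\mathcal{M}(i)_l\to\mathcal{M}(i)_{l+1}$ is injective for $l$ large enough. Third, since the forbidden subspaces at each stage are proper, I can moreover insist at the end that $Z_1,\dots,Z_n$ remain $\mathcal{F}$--linearly independent, hence a genuine basis of $\mathcal{H}$; finally one takes the maximum of the finitely many thresholds $l$ arising in the $n$ inductive steps to get a single $l_0$ beyond which all the maps (\ref{eq3.1}) are injective.

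I would conclude by noting the mild technical points that need care. The main obstacle is the smooth-family aspect: the decomposition into associated primes and the prime-avoidance choice must be made uniformly over (a neighbourhood in) $\mathbf{M}$, not just at a single point. This is handled by the standing hypothesis in Section~\ref{sec2} that bases exist in all modules (singular points are deleted), which forces the relevant dimensions $\dim\mathcal{M}(i)_l$ to be locally constant and the associated-prime structure to vary nicely; after shrinking $\mathbf{M}$ one is reduced to the purely algebraic statement fibrewise, and the chosen $Z_{i+1}$ can be taken with constant (smooth) coefficients in a basis of $\mathcal{H}$. The only other thing to verify is that discarding $\mathfrak{m}$ from the associated primes is legitimate precisely because the $\mathfrak{m}$--primary (finite-length) part of $\mathcal{M}(i)$ vanishes in high degree, so it does not obstruct injectivity of (\ref{eq3.1}) for $l\gg0$; this is where Definition~\ref{def3.1}'s ``for $l$ large enough'' clause is indispensable. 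With these observations the induction closes and the ordinary basis is produced.
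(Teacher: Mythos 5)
Your proposal is correct and follows essentially the same route as the paper's own sketch at the end of Section~\ref{sec3}: list the finitely many associated primes of the graded $\mathcal A$--module $\mathcal M(i)$, choose $Z_{i+1}$ by prime avoidance so that it lies outside $\mathcal H\cap(\cup\mathfrak p)$, and iterate over $\mathcal M(0),\ldots,\mathcal M(n-1)$. The only cosmetic difference is how the irrelevant ideal $\mathfrak m$ is neutralized --- the paper replaces $\Omega_*$ by a $c$--lift to delete the $\mathcal A/\mathfrak m$--submodules, while you observe directly that the finite--length part sits in bounded degree and is therefore absorbed by the ``$l$ large enough'' clause of Definition~\ref{def3.1} --- and these amount to the same thing.
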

In full generality, this is rather nontrivial result, however, for all current examples, such a~basis can be easily found: the vector fields $Z_1,Z_2,\ldots$ should not be too special, see also below. We are passing to the main topic.
\begin{definition}\label{def3.2} The basis $Z_1,\ldots, Z_n$ of $\mathcal H$ is called \emph{quasiregular} if (\ref{eq3.1}) are injections for all $l\geq 1.$ A~filtration $\Omega_*$ is called \emph{involutive} if there exists a~quasiregular basis and moreover $\mathcal H\mathcal M_l=\mathcal M_{l+1}$ $(l\geq 0).$ \end{definition}
The last condition is clearly equivalent to $\Omega_l+\mathcal L_\mathcal H\Omega_l=\Omega_{l+1}$ $(l\geq 0)$ which can be ensured by a~simple change of the original filtration. Let us introduce the $c$--lift $\Omega_{*+c}$ $(c=0,1,\ldots)$ of the original filtration $\Omega_*$ such that
\[\Omega_{*+c}=\bar\Omega_*:\bar\Omega_0=\Omega_c\subset\bar\Omega_1=\Omega_{c+1}\subset\cdots\subset\Omega=\cup\bar\Omega_l=\cup\Omega_{l+c}.\] Due to the second requirement (\ref{eq2.1}), it follows easily that Theorem~\ref{th3.1} is equivalent to
\begin{theorem}\label{th3.2} Every lift $\Omega_{*+c}$ with $c$ large enough is involutive. \end{theorem}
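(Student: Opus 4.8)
The plan is to deduce Theorem~\ref{th3.2} from Theorem~\ref{th3.1} by a mere index shift, which simultaneously establishes the equivalence announced above. First I would invoke Theorem~\ref{th3.1} to fix an ordinary basis $Z_1,\ldots,Z_n$ of $\mathcal H$ for the given good filtration $\Omega_*$, so that the maps $Z_{i+1}\colon\mathcal M(i)_l\rightarrow\mathcal M(i)_{l+1}$ of (\ref{eq3.1}) are injective for all $l\geq l_0$, for some $l_0$. Then, using the second requirement of (\ref{eq2.2}), I would choose $l_1$ with $\Omega_l+\mathcal L_\mathcal H\Omega_l=\Omega_{l+1}$ --- equivalently $\mathcal H\mathcal M_l=\mathcal M_{l+1}$ --- for all $l\geq l_1$, and set $c=\max(l_0,l_1)$. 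The claim to be proved is then that every lift $\Omega_{*+c'}$ with $c'\geq c$ is involutive.

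Next I would record the identifications linking the gradation $\bar{\mathcal M}$ of $\bar\Omega_*=\Omega_{*+c'}$ to the gradation $\mathcal M$ of $\Omega_*$. For each $l\geq1$ one has literally $\bar{\mathcal M}_l=\bar\Omega_l/\bar\Omega_{l-1}=\Omega_{l+c'}/\Omega_{l+c'-1}=\mathcal M_{l+c'}$, and under this equality the operator induced on $\bar{\mathcal M}$ by $\mathcal L_{Z_{i+1}}$ coincides with the operator $Z_{i+1}$ on $\mathcal M$ (this uses only $\mathcal L_\mathcal H\Omega_k\subset\Omega_{k+1}$, the first half of (\ref{eq2.2})). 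Since the ideal $\mathcal A(i)\subset\mathcal A$ depends only on $Z_1,\ldots,Z_i$, one has $\mathcal A(i)\mathcal M\cap\mathcal M_k=\sum_{j\leq i}Z_j\mathcal M_{k-1}$, and a routine check then gives $\bar{\mathcal M}(i)_l=\mathcal M(i)_{l+c'}$ for all $l\geq1$, compatibly with the $Z_{i+1}$-action; the only mildly delicate point is that $\bar{\mathcal M}_0=\Omega_{c'}$ is not $\mathcal M_{c'}=\Omega_{c'}/\Omega_{c'-1}$, but this discrepancy disappears after applying $\mathcal L_\mathcal H$ because $\mathcal L_\mathcal H\Omega_{c'-1}\subset\Omega_{c'}$. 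Consequently $Z_{i+1}\colon\bar{\mathcal M}(i)_l\rightarrow\bar{\mathcal M}(i)_{l+1}$ is identified with $Z_{i+1}\colon\mathcal M(i)_{l+c'}\rightarrow\mathcal M(i)_{l+c'+1}$.

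With these identifications I would verify the two clauses of Definition~\ref{def3.2} for $\bar\Omega_*$. Quasiregularity: for $l\geq1$ we have $l+c'\geq1+l_0>l_0$, so the identified map is injective, i.e.\ $Z_1,\ldots,Z_n$ is quasiregular for $\bar\Omega_*$. The condition $\mathcal H\bar{\mathcal M}_l=\bar{\mathcal M}_{l+1}$: for $l\geq1$ it reads $\mathcal H\mathcal M_{l+c'}=\mathcal M_{l+c'+1}$, true because $l+c'\geq l_1$; for $l=0$ it reads $\Omega_{c'}+\mathcal L_\mathcal H\Omega_{c'}=\Omega_{c'+1}$, which is (\ref{eq2.2}) at level $c'\geq l_1$. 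This establishes involutivity of every $\Omega_{*+c'}$ with $c'\geq c$. For the converse half of the equivalence I would simply observe that if some lift $\Omega_{*+c}$ is involutive with quasiregular basis $Z_1,\ldots,Z_n$, the same identification shows $Z_{i+1}\colon\mathcal M(i)_k\rightarrow\mathcal M(i)_{k+1}$ injective for all $k\geq c+1$, so this basis is ordinary for $\Omega_*$ in the sense of Definition~\ref{def3.1}, giving Theorem~\ref{th3.1} back.

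I do not anticipate a genuine obstacle: the substance lies entirely in Theorem~\ref{th3.1}, which may be assumed, and the remainder is bookkeeping around a single index shift. The one point requiring attention is that the threshold ``$l$ large enough'' of Definition~\ref{def3.1} and the threshold in the good filtration axiom (\ref{eq2.2}) must be absorbed into one finite constant $c$; this is precisely why the statement concerns a sufficiently high lift rather than the original filtration $\Omega_*$ itself.
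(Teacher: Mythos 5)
Your proof is correct and follows essentially the same route as the paper, which simply asserts that Theorem~\ref{th3.2} follows from (indeed is equivalent to) Theorem~\ref{th3.1} via the $c$--lift and the second requirement of (\ref{eq2.2}); you have merely filled in the index--shift bookkeeping, including the correct treatment of the degree--zero term $\bar\Omega_0=\Omega_{c'}$ and the converse direction.
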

This provides the essence of all prolongations into the involutivity mechanisms \cite{T12,T17,T6,T18}.
Alas the singular solutions can be included, see below.

\bigskip

We conclude with a~brief description of the ordinary basis. The first term $Z_1$ appears as follows \cite{T8}. There is a~finite family of prime ideals $\mathfrak p\subset\mathcal A, \mathfrak p\neq\mathcal A,$ \emph{associated} to the module $\mathcal M,$ that is, such that there exists a~submodule of $\mathcal M$ isomorphic to $\mathcal A/\mathfrak p.$ The multiplications $Z:\mathcal M_l\rightarrow\mathcal M_{l+1}$ $(Z\in\mathcal H)$ are all injective if and only if
\begin{equation}\label{eq3.2} Z\notin\mathcal A_1\cap (\cup\mathfrak p)=\mathcal H\cap (\cup\mathfrak p).\end{equation}
If all $\mathfrak p\neq\mathfrak m$ then such $Z=Z_1$ does exist. If however $\mathfrak m$ belongs to the associated ideals then the submodules of $\mathcal M$ isomorphic to $\mathcal A/\mathfrak m=\mathcal F$ can be deleted if the original filtration $\Omega_*$ is replaced with a~$c$--lift large enough. The following terms $Z_2,\ldots,Z_n$ appear by analogous construction applied to the $\mathcal A$--modules $\mathcal M(1),\ldots,\mathcal M(n-1)$ instead of $\mathcal M=\mathcal M(0).$

Altogether we conclude that the terms $Z_1,\ldots, Z_n$ of the ordinary basis should not lie in a~finite family of certain linear subspaces of the $\mathcal F$--module $\mathcal H.$ 
We will succintly express this property by saying that they are ``not too special''.
It follows that the total derivatives $D_1,\ldots,D_n\in\mathcal H$ are ``not too special'' for an~appropriate ``not too special'' choice of the independent variables $x_1,\ldots,x_n.$

\bigskip

{\sl Remark. }The primary approach to the compatibility of the systems of differential equations (\ref{eq1.9}) directly use the commutativity $\partial^2/\partial x\partial y=\partial^2/\partial y\partial x$ of various second derivatives which results in perfect ultimate theory \cite{T15}. Alas, though this theory can be effectively applied to particular problems, the calculations strongly depend on subtle formal details. On the contrary, the \'E. Cartan's involutivity \cite{T16} subsequently completed with the prolongation procedure \cite{T17} is of the genuinely geometrical nature. In the actual rigorous expositions, this topic however belongs to the most difficult tasks even though the powerful tools of the commutative and the homological algebra are applied \cite{T12,T18}.
The classical involutivity concept differs from ours in Definition~\ref{def3.2}, since even the involutivity of a~finite--order system of differential equations and of a~finite Pfaffian system is introduced \cite{T12}--\cite{T17},\cite{T6}. It~follows that Theorem~\ref{th3.2} declares the involutivity of every Pfaffian system $\omega=0$ $(\omega\in\Omega_l, l$ fixed and large enough) in the common classical sense.

\section{The standard filtrations}\label{sec4}
Theorems~\ref{th3.1} and \ref{th3.2} concern the higher--order terms $\mathcal M[i]_l$ of the $\mathcal A$--module $\mathcal M[i].$ Returning to the original filtration $\Omega_*,$ they describe a~certain property of the higher--order terms $\Omega_l$ of a~good filtration $\Omega_*.$

For given vector fields $Z_1,Z_2,\ldots\in\mathcal H,$ let us introduce the large series of accompanying ``rough'' filtrations
\[\begin{array}{ll}
\Omega(Z_1)_*:\Omega(Z_1)_0\subset\Omega(Z_1)_1\subset\ldots\subset\Omega&(\Omega(Z_1)_l=\sum\mathcal L^k_{Z_1}\Omega_l),\\
\Omega(Z_1,Z_2)_*:\Omega(Z_1,Z_2)_0\subset\Omega(Z_1,Z_2)_1\subset\ldots\subset\Omega& (\Omega(Z_1,Z_2)_l=\sum\mathcal L^k_{Z_2}\Omega(Z_1)_l),\\
\cdots \end{array}\] of diffiety $\Omega.$ For every submodule $\Theta\subset\Phi$ and a~vector field $Z\in\mathcal H(\Theta)$ we moreover introduce the submodule $\text{Ker}_Z\Theta\subset\Theta$ of all $\vartheta\in\Theta$ with $\mathcal L_Z\vartheta\in\Theta.$ (The latter concept will be applied only in the particular case when $\Theta\subset\Omega$ and $Z\in\mathcal H=\mathcal H(\Omega)\subset\mathcal H(\Theta).$) One can see that Theorems~\ref{th3.1} and \ref{th3.2} are equivalent to the equalities
\begin{equation}\label{eq4.1}\begin{array}{c}
\mbox{Ker}_{Z_1}\Omega_{l+1}=\Omega_l,\\ \mbox{Ker}_{Z_2}\Omega(Z_1)_{l+1}=\Omega(Z_1)_l,\\ \mbox{Ker}_{Z_3}\Omega(Z_1,Z_2)_{l+1}=\Omega(Z_1,Z_2)_l,\\ \ldots \end{array}
\end{equation}
valid for $l$ large enough and not too special $Z_1,Z_2,\ldots\in\mathcal H.$
Our next aim is to ensure (\ref{eq4.1}) for all values of $l$ after appropriate adjustements and this is possible if certain obstructions $\mathcal R^0,\mathcal R^1,\ldots$ are absent.

Let us start with the \emph{first equality} (\ref{eq4.1}). Abbreviating $X=Z_1,$ we may suppose $\mbox{Ker}_X\Omega_l=\Omega_{l-1}\ (l\geq L)$ and consider the inclusions
\[\cdots\supset\Omega_L=\mbox{Ker}_X\Omega_{L+1}\supset\Omega_{L-1}=\mbox{Ker}_X\Omega_L \supset\mbox{Ker}_X^2\Omega_L\supset\mbox{Ker}_X^3\Omega_L\supset\cdots.\]
The strict inclusions terminate with certain equalities $\mbox{Ker}_X^k\Omega_L=\mbox{Ker}_X^{k+1}\Omega_L$ $(k\geq K)$ and so we obtain the new filtration
\begin{equation}\label{eq4.2}\begin{array}{l}\bar\Omega_*:\bar\Omega_0=\mbox{Ker}_X^{K-1}\Omega_L\subset\bar\Omega_1=\mbox{Ker}_X^{K-2}\Omega_L \subset\cdots\\ \subset\bar{\Omega}_{K-2}=\mbox{Ker}_X\Omega_L\subset\bar\Omega_{K-1}=\Omega_L\subset\bar\Omega_K=\Omega_{L+1}\subset\cdots
\end{array}\end{equation}
of diffiety $\Omega$ with strict inclusions together with the submodule
\[\mathcal R^0=\mbox{Ker}_X^K\Omega_L=\mbox{Ker}_X\bar\Omega_0\subset\bar\Omega_0\subset\Omega.\]
\begin{theorem}[\cite{T6,T8,T9}]\label{th4.1}
Filtration $(\ref{eq4.2})$ does not depend on the choice of $X.$ The module $\mathcal R^0$ is flat and does not depend even on the choice of the original good filtration $\Omega_*.$ \end{theorem}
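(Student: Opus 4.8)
The idea is to describe $\mathcal{R}^0$ intrinsically, so strongly that both independence claims become evident, and then to read off the flatness. First unwind: $\mathrm{Ker}_X^k\Omega_L$ is the module of those $\vartheta\in\Omega_L$ with $\mathcal{L}_X\vartheta,\dots,\mathcal{L}_X^k\vartheta\in\Omega_L$, so the chain $\mathrm{Ker}_X^k\Omega_L$ ($k=1,2,\dots$) decreases through finite--dimensional submodules and stabilizes at $k=K$; thus $\mathcal{R}^0=\mathrm{Ker}_X^k\Omega_L$ for all $k\ge K$, $\mathcal{L}_X\mathcal{R}^0\subseteq\mathcal{R}^0$, and $\vartheta\in\mathcal{R}^0$ precisely when $\mathcal{L}_X^j\vartheta\in\Omega_L$ for every $j\ge0$. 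Next comes the \emph{order bound}: every finite--dimensional submodule $\mathcal{R}\subseteq\Omega$ with $\mathcal{L}_X\mathcal{R}\subseteq\mathcal{R}$ is contained in $\Omega_L$; for if $L'$ is minimal with $\mathcal{R}\subseteq\Omega_{L'}$ and $L'>L$, then $\mathcal{L}_X\mathcal{R}\subseteq\mathcal{R}\subseteq\Omega_{L'}$ gives $\mathcal{R}\subseteq\mathrm{Ker}_X\Omega_{L'}=\Omega_{L'-1}$, contradicting minimality. Hence $\mathcal{R}^0$ is \emph{the largest finite--dimensional $\mathcal{L}_X$--invariant submodule of $\Omega$}, a characterisation free of the good filtration $\Omega_*$; and the whole chain $(\ref{eq4.2})$ is reconstructed from $\Omega_L\subset\Omega_{L+1}\subset\cdots$ together with the iterated kernels $\mathrm{Ker}_X^p\Omega_L$.

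The independence of $X$ I would obtain from the identity $\mathcal{L}_{fY}\vartheta=f\,\mathcal{L}_Y\vartheta$, valid for $\vartheta$ in any submodule of $\Omega$ and $Y\in\mathcal{H}$ because $\vartheta(Y)=0$: it makes $Y\mapsto\mathcal{L}_Y\vartheta$ additive and $\mathcal{F}$--linear modulo that submodule, so the set of $Y$ that keep a fixed $\vartheta$ inside is again an $\mathcal{F}$--submodule of $\mathcal{H}$. Together with the exact meaning of ``not too special'' from $(\ref{eq3.2})$ --- $X$ must avoid the fixed finite family of subspaces governed by the primes associated to $\mathcal{M}$ --- this lets one promote $\mathcal{L}_X\mathcal{R}^0\subseteq\mathcal{R}^0$ to $\mathcal{L}_{\mathcal{H}}\mathcal{R}^0\subseteq\mathcal{R}^0$, and $\mathrm{Ker}_X^p\Omega_L$ to the iterated kernel relative to \emph{all} of $\mathcal{H}$; then the stabilization length $K$ and the filtration $(\ref{eq4.2})$ no longer depend on the particular not--too--special $X$.

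Finally, flatness. Using that the total derivatives commute, that $\mathcal{L}_{D_i}\mathcal{R}^0\subseteq\mathcal{R}^0$, and that $D_i\in\mathcal{H}(\mathcal{R}^0)$, I would show $\mathcal{R}^0$ possesses a basis of exact forms $df^1,\dots,df^s$ with $D_if^\sigma=0$ for all $i,\sigma$ (differentials of the first integrals common to $\mathcal{H}$); then $d(\sum g_\sigma df^\sigma)=-\sum df^\sigma\wedge dg_\sigma\in\mathcal{R}^0\wedge\Phi$, which is $d\mathcal{R}^0\equiv0\ (\mathrm{mod}\ \mathcal{R}^0)$, i.e. flatness. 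All of this is the geometric face of one commutative--algebra fact: on passing to the gradation $\mathcal{M}$, the module $\mathcal{R}^0$ corresponds to the largest submodule of $\mathcal{M}$ annihilated by a power of $\mathfrak{m}$ --- a canonical, finite--dimensional object --- which accounts at once for the flatness and for the independence of all choices. I expect this identification to be the main obstacle: recognising the stabilized iterated $X$--kernel as a genuinely canonical (first--integral) object and not an artefact of the chosen $X$. That step is where the homological and commutative--algebra machinery of \cite{T8,T9} (associated primes, the effect of multiplication by a generic linear form, $\mathfrak{m}$--torsion) is indispensable, the order bound and the $\mathcal{F}$--linearity observation being the elementary preparations that make it apply.
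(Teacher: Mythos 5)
First, a point of comparison: the paper itself contains no proof of Theorem~\ref{th4.1} --- it is quoted from \cite{T6,T8,T9} --- so your sketch has to stand on its own, and as it stands it establishes only the easy half. Your elementary preparations are correct: the unwinding of $\mathrm{Ker}_X^k\Omega_L$, the stabilization at $k=K$, the order bound (any finite--dimensional $\mathcal{L}_X$--invariant submodule of $\Omega$ lies in $\Omega_L$, by minimality of $L'$ and $\mathrm{Ker}_X\Omega_{L'}=\Omega_{L'-1}$), and hence the characterization of $\mathcal{R}^0$ as the largest finite--dimensional $\mathcal{L}_X$--invariant submodule of $\Omega$. That genuinely yields independence of the original good filtration for a \emph{fixed} not--too--special $X$, and it is a clean argument.

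The two hard claims, however, are not proved. (i) Independence of $X$: the proposed promotion of $\mathcal{L}_X\mathcal{R}^0\subseteq\mathcal{R}^0$ to $\mathcal{L}_{\mathcal{H}}\mathcal{R}^0\subseteq\mathcal{R}^0$ does not follow from the $\mathcal{F}$--linearity of $Y\mapsto\mathcal{L}_Y\vartheta$. That observation only shows that $\{Y\in\mathcal{H}:\mathcal{L}_Y\vartheta\in\mathcal{R}^0\}$ is an $\mathcal{F}$--submodule of $\mathcal{H}$; a proper submodule (for instance $\mathcal{F}X$ itself) can perfectly well contain one not--too--special $X$, and the genericity condition (\ref{eq3.2}) says nothing further about this set. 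To conclude you would need invariance under a spanning family of generic fields $X'$, i.e.\ precisely that $\mathcal{R}^0$ computed from $X'$ coincides with $\mathcal{R}^0$ computed from $X$ --- which is the statement being proved, so the argument is circular; the same gap affects the independence of the filtration (\ref{eq4.2}), whose iterated kernels still refer to $X$. (ii) Flatness: you propose to ``show $\mathcal{R}^0$ possesses a basis of exact forms $df^\sigma$ with $D_if^\sigma=0$,'' but for a finite--dimensional module this is, by the Frobenius theorem, exactly the flatness assertion, and no argument is offered for it; note that even granting $\mathcal{L}_{\mathcal{H}}\mathcal{R}^0\subseteq\mathcal{R}^0$, flatness involves the much larger module $\mathcal{H}(\mathcal{R}^0)\supsetneq\mathcal{H}(\Omega)$, so invariance under the total derivatives alone does not give $d\mathcal{R}^0\cong 0\ (\mathrm{mod}\ \mathcal{R}^0)$. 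Likewise the closing identification of $\mathcal{R}^0$ with the maximal $\mathfrak{m}$--power--torsion submodule of $\mathcal{M}$ is asserted, not derived. You yourself flag that identification as ``the main obstacle'' and defer it to the machinery of \cite{T8,T9}; but that is exactly where the content of the theorem lies, so the proposal remains a plausible plan whose decisive steps are missing.
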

So we may denote $\mathcal R^0=\mathcal R^0(\Omega)$ and there are equalities
\[\mbox{Ker}_X\bar\Omega_{l+1}=\bar\Omega_l\ (l>0), \mbox{Ker}_X\bar\Omega_0=\mathcal R^0, \mbox{Ker}_X\mathcal R^0=\mathcal R^0\]
corresponding to the injections
\[X:\bar{\mathcal M}_l\rightarrow\bar{\mathcal M}_{l+1}\ (l>0),\ \bar{\mathcal M}_0/\mathcal R^0\rightarrow\bar{\mathcal M}_1\ (\bar{\mathcal M}_l=\bar\Omega_l/\bar\Omega_{l-1})\]
for all not too special vector fields $X\in\mathcal H.$ The first equality (\ref{eq4.1}) is universal if and only if $\mathcal R^0=0$ is the trivial module.

Passing to the \emph{second equality} (\ref{eq4.1}), the resonings will be applied ``modulo $Z_1$'' as follows. We abbreviate $Y=Z_2$ and consider the inclusions
\[\cdots\supset\Omega(X)_L\supset\Omega(X)_{L-1}=\mbox{Ker}_Y\Omega(X)_L\supset\mbox{Ker}_Y^2\Omega(X)_L \supset\mbox{Ker}_Y^3\Omega(X)_L\supset\cdots\]
with $L$ large enough. The strict inclusions again terminate and we obtain the new filtration
\begin{equation}\label{eq4.3}\begin{array}{l}\bar\Omega(X)_*:\bar\Omega(X)_0=\mbox{Ker}_Y^{K-1}\Omega(X)_L\subset\bar\Omega(X)_1=\mbox{Ker}_Y^{K-2}\Omega(X)_L \subset\cdots\\ \subset\bar{\Omega}(X)_{K-2}=\mbox{Ker}_Y\Omega(X)_L\subset\bar\Omega(X)_{K-1}=\Omega(X)_L\subset\bar\Omega(X)_K=\Omega(X)_{L+1}\subset\cdots
\end{array}\end{equation}
of diffiety $\Omega$ with strict inclusions together with the module
$$\mathcal R^1=\mbox{Ker}_Y^K\Omega(X)_L=\mbox{Ker}_Y\bar\Omega(X)_0\subset\bar\Omega(X)_0\subset\Omega.4$$
\begin{theorem}[\cite{T8,T9}]\label{th4.2}
Filtration $(\ref{eq4.3})$ does not depend on the choice of $Y.$ The module $\mathcal R^1$ is flat and does not depend even on the choice of the original good filtration $\Omega_*.$ \end{theorem}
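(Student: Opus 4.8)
The plan is to run the whole argument of Theorem~\ref{th4.1} ``modulo $Z_1$'': the rough filtration $\Omega(X)_*$ (with $X=Z_1$) takes over the role played there by the original good filtration $\Omega_*$, and $Y=Z_2$ takes over the role of $X$. Thus the first task is to check that $\Omega(X)_*$ shares the two features of $\Omega_*$ that the proof of Theorem~\ref{th4.1} actually exploits. Compatibility, $\mathcal L_\mathcal H\Omega(X)_l\subset\Omega(X)_{l+1}$, comes from routine commutator bookkeeping: since $[\mathcal H,\mathcal H]\subset\mathcal H$, each operator $\mathcal L_Z\mathcal L^k_X$ with $Z\in\mathcal H$ rewrites as a finite sum of operators $\mathcal L^j_X\mathcal L_W$ with $j\le k$ and $W\in\mathcal H$, while each $\mathcal L_W\Omega_l\subset\Omega_{l+1}$. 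The stabilisation feature, $\text{Ker}_Y\Omega(X)_{l+1}=\Omega(X)_l$ for $l$ large and ``not too special'' $Y$, is precisely the second equality of~(\ref{eq4.1}): it is the injectivity of~(\ref{eq3.1}) for $i=1$ along an ordinary basis, hence Theorem~\ref{th3.1}. With these two facts the chain $\text{Ker}^k_Y\Omega(X)_L$ is strictly decreasing past some index and then stabilises, and the filtration~(\ref{eq4.3}) together with the module $\mathcal R^1=\text{Ker}_Y\bar\Omega(X)_0$ is produced exactly as in the proof of Theorem~\ref{th4.1}.

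To get independence of~(\ref{eq4.3}) and of $\mathcal R^1$ from $Y$, I would copy the corresponding step of Theorem~\ref{th4.1}. One notes first that $\text{Ker}^i_Y\Omega(X)_L=\{\vartheta\in\Omega(X)_L:\mathcal L^j_Y\vartheta\in\Omega(X)_L,\ j=0,\dots,i\}$, so that each term of~(\ref{eq4.3}) is an ``$i$-step $Y$-kernel'' of $\Omega(X)_L$; then, passing to the associated graded $\mathcal A$-module of $\Omega(X)_*$ and its associated primes $\mathfrak p\ne\mathfrak m$, one sees that these kernels depend only on that graded module, not on the particular ``not too special'' $Y$, in the manner of~(\ref{eq3.2}). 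Once $Y$-independence is in hand, flatness of $\mathcal R^1$ follows as $\mathcal R^0$ was treated in Theorem~\ref{th4.1}: by construction $\text{Ker}_Y\mathcal R^1=\mathcal R^1$, i.e. $\mathcal L_Y\mathcal R^1\subset\mathcal R^1$; since $\mathcal L_{fY}\vartheta=f\,\mathcal L_Y\vartheta$ for $\vartheta\in\Omega$, $Y\in\mathcal H$, $f\in\mathcal F$, and $\mathcal R^1$ is the same module for every ``not too special'' $Y$, and such vector fields span $\mathcal H$, we get $\mathcal L_\mathcal H\mathcal R^1\subset\mathcal R^1$; the Frobenius condition~(\ref{eq2.1}) for $\mathcal R^1$ is then extracted from this together with the $\mathcal A$-module structure on the associated graded.

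Independence of the original good filtration $\Omega_*$ is the last point. Any two good filtrations $\Omega_*$ and $\Omega'_*$ of $\Omega$ are commensurable, $\Omega_l\subset\Omega'_{l+c}$ and $\Omega'_l\subset\Omega_{l+c}$ for a fixed $c$, hence so are the rough filtrations $\Omega(X)_*$ and $\Omega'(X)_*$; since the $\text{Ker}_Y$-stabilisation producing~(\ref{eq4.3}) is unaffected by an overall index shift of its input, $\mathcal R^1$ comes out unchanged. (Equivalently: first establish a Theorem~\ref{th4.1}-type filtration-independence statement with $\Omega(X)_*$ as input, then transport it.)

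The genuine obstacle is the first step: because the terms $\Omega(X)_l$ are infinite-dimensional, Theorem~\ref{th4.1} cannot be quoted as a black box, and one must verify that the termination of $\text{Ker}^k_Y\Omega(X)_L$ and the whole kernel/associated-prime machinery survive the loss of finite-dimensionality of the filtration terms. The way through is to pass to the associated graded $\mathcal A$-module of $\Omega(X)_*$ --- still finitely generated over the Noetherian algebra $\mathcal A$ --- and carry out the discussion of kernels and associated primes there, where Noetherianity forces the required stabilisation; the finite-dimensionality of the individual $\Omega_l$ then enters only through the existence of a Hilbert polynomial for that graded module. Descending from the graded picture back to the explicit filtration~(\ref{eq4.3}) is the one genuinely laborious part.
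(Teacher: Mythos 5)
A preliminary remark on the comparison itself: the paper contains no proof of Theorem~\ref{th4.2} (nor of Theorem~\ref{th4.1}); both are quoted from \cite{T8,T9}, and the text of Section~4 only sketches the construction with the words that the reasonings of the first case ``will be applied modulo $Z_1$.'' Your overall plan --- replace $\Omega_*$ by the rough filtration $\Omega(X)_*$, let $Y$ play the former role of $X$, rerun the kernel construction, and control genericity through the associated primes of the graded $\mathcal A$--module as in (\ref{eq3.2}) --- is exactly the route the paper indicates, so there is no divergence of approach to report; the comparison can only be made with that sketch and with the cited sources.

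Within your proposal, however, one step does not work as written: the flatness of $\mathcal R^1$. From $\text{Ker}_Y\mathcal R^1=\mathcal R^1$ for all not too special $Y$ you correctly deduce $\mathcal L_{\mathcal H}\mathcal R^1\subset\mathcal R^1$ (using $\mathcal L_{fY}\vartheta=f\,\mathcal L_Y\vartheta$ on $\Omega$ and the fact that the admissible $Y$ span $\mathcal H$), but invariance under $\mathcal L_{\mathcal H(\Omega)}$ is far weaker than the Frobenius condition (\ref{eq2.1}) for $\mathcal R^1$, which involves the much larger module $\mathcal H(\mathcal R^1)$: the diffiety $\Omega$ itself satisfies $\mathcal L_\mathcal H\Omega\subset\Omega$ and is not flat. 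Concretely, for $\vartheta\in\mathcal R^1$ the invariance only yields $Z\rfloor d\vartheta=\mathcal L_Z\vartheta\in\mathcal R^1$ for $Z\in\mathcal H$, hence $d\vartheta\cong 0$ modulo $\mathcal R^1\wedge\Phi+\Omega\wedge\Omega$, and eliminating the $\Omega\wedge\Omega$ part is precisely the content of the flatness claim; it requires a separate argument resting on the specific structure of the residual module (this is where \cite{T8,T9} do real work), not on mere $\mathcal H$--invariance, so ``extracted from the $\mathcal A$--module structure'' leaves the essential point unproved. The other delicate points --- termination of the chain $\text{Ker}_Y^k\Omega(X)_L$ although $\Omega(X)_L$ is infinite--dimensional, and independence of the kernels from the generic $Y$ --- you identify correctly and route through Noetherianity of the associated graded module and its associated primes, which is the right direction, but in your text they remain a plan rather than a proof, as you yourself concede in the final paragraph.
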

We may denote $\mathcal R^1=\mathcal R^1(\Omega)$ and there are equalities
\[\mbox{Ker}_Y\bar\Omega(X)_{l+1}=\bar\Omega(X)_l\ (l>0),\ \mbox{Ker}_Y\bar\Omega(X)_0=\mathcal R^1,\   \mbox{Ker}_Y\mathcal R^1=\mathcal R^1\]
corresponding to the injections
\[Y:\bar{\mathcal M}[1]_l\rightarrow\bar{\mathcal M}[1]_{l+1}\ (l>0),\ \bar{\mathcal M}[1]_0/\mathcal R^1\rightarrow\bar{\mathcal M}[1]_1\ (\bar{\mathcal M}[1]_l=\bar\Omega(X)_l/\bar\Omega(X)_{l-1})\]
for all not too special vector fields $Y\in\mathcal H.$ The second equality (\ref{eq4.1}) is universal if and only if $\mathcal R^1=0$ is the trivial module.
 
 \medskip
 
The procedure can be continued with $Z_3,Z_4,\ldots$ as well with quite analogous result. We obtain certain filtrations
\[\bar\Omega_*,\bar\Omega(Z_1)_*,\bar\Omega(Z_1,Z_2)_*,\ldots\]
which are \emph{good} in the common sense that
\begin{equation}\label{eq4.4}\mathcal L_{\mathcal H}\bar\Omega(\cdot)_l\subset\bar\Omega(\cdot)_l\ (\emph{all }l),\quad \bar\Omega(\cdot)_l+\mathcal L_{\mathcal H}\bar\Omega(\cdot)_l=\bar\Omega(\cdot)_{l+1}\ (l\mbox{ large enough})\end{equation}
and moreover \emph{standard} in the sense that
\begin{equation}\label{eq4.5}\begin{array}{l}
\emph{Ker}_{Z_{k+1}}\bar\Omega(Z_1,\ldots,Z_k)_{l+1}=\bar\Omega(Z_1,\ldots,Z_k)_{l}\quad (l>0),\\
\emph{Ker}_{Z_{k+1}}\bar\Omega(Z_1,\ldots,Z_k)_0=\mathcal R^k,\ \emph{Ker}_{Z_{k+1}}\mathcal R^k=\mathcal R^k.
\end{array}\end{equation}
The procedure becomes trivial if $k>\nu=\nu(\Omega)$ since then
\[\Omega(Z_1,\ldots,Z_k)_l=\Omega\ (l\mbox{ large enough}), \ \bar\Omega(Z_1,\ldots,Z_k)_l=\Omega,\ \mathcal R^k=\Omega.\]
The resulting \emph{residual submodules} $\mathcal R^k\subset\Omega$ constitute the \emph{controllability series}
\begin{equation}\label{eq4.6}\mathcal R^0\subset\mathcal R^1\subset\cdots\subset\mathcal R^{\nu+1}=\Omega,\end{equation} to be discussed below in more detail. We speak of a~\emph{controllable diffiety} $\Omega$ if $\mathcal R^0=\cdots =\mathcal R^\nu=0$ are trivial modules.

\section{Ordinary differential equations}\label{sec5}
We interrupt the general theory for a~relax. Let us mention the relatively simple diffieties $\Omega\subset\Phi(\mathbf M)$ with one independent variable $x=x_1$ (abbreviation). Omitting the trivial subcase $\dim\mathbf M<\infty,$ we have the Hilbert polynomial
\begin{equation}\label{eq5.1}
\dim\mathcal M_l=c_0=\mu(\Omega)>0\qquad (l\text{ large enough}). \end{equation}
The involutivity becomes trivial, however, the standard filtration (\ref{eq4.2}) with any nonvanishing vector field $X=Z_1\in\mathcal H(\Omega)$ is worth mentioning.

We may choose $X=D=D_1$ the formal derivative. Let us moreover suppose
\[\text{Ker}_D\Omega_{l+1}=\Omega_l\quad (l\geq L),\qquad \text{Ker}^k_D\Omega_L=\text{Ker}^{k+1}_D\Omega_L\quad (k\geq K).\]
Then the standard filtration
\begin{equation}\label{eq5.2}
\bar\Omega_*:\bar\Omega_0=\text{Ker}^{K-1}_D\Omega_L\subset\cdots\subset\bar\Omega_{K-2}=\text{Ker}_D\Omega_L\subset
\bar\Omega_{K-1}=\Omega_L\subset\bar\Omega_K=\Omega_{L+1}\subset\cdots
\end{equation}
together with the flat submodule
\begin{equation}\label{eq5.3}
\mathcal R^0=\text{Ker}^K_D\Omega_L=\text{Ker}_D\bar\Omega_0\subset\Omega_0\qquad (\text{Ker}_D\mathcal R^0=\mathcal R^0)\end{equation}
easily appear by a~merely linear algebra.
In the meantime, we also obtain a~rather useful basis of diffiety $\Omega$ as a~by--product. This is made as follows.
Let us choose a~basis\\\\
$\tau^r$ $(r=1,\ldots,R=\dim\mathcal R^0)$ of module $\mathcal R^0$ and then together with\\
$\pi^j_0$ $(j=1,\ldots,j_0)$ basis of the module $\bar\Omega_0$ and then together with\\
$\pi^j_1=\mathcal L_D\pi^j_0$ ($j$ as above), $\pi^{j'}_0$ $(j'=j_0+1,\ldots,j_1)$ of module $\bar\Omega_1,$ and then with\\ $\pi^j_2=\mathcal L^2_D\pi^j_0, \pi^{j'}_1=\mathcal L_D\pi^{j'}_0,$ $\pi^{j''}_0$ $(j''=j'_1+1,\ldots,j'_2)$ of module $\bar\Omega_2,$
$$\cdots\ .$$
The procedure in a~certain sense stops. The identity
\[\text{Ker}_D\bar\Omega_K=\text{Ker}_D\Omega_{L+1}=\Omega_L=\bar\Omega_{K-1}\]
implies $j_K=j_{K+1}$ and analogously $j_{K+1}=j_{K+2}=\cdots$ as well. We obtain only a~finite number $j_k$ of \emph{initial forms}
\begin{equation}\label{eq5.4}
\pi^1_0,\ldots,\pi^{j_0}_0\in\bar\Omega_0;\pi^{j_0+1},\ldots,\pi^{j_1}_0\in\bar\Omega_1;\ldots; \pi^{j_{K-1}+1}_0,\ldots,\pi^{j_K}_0\in\bar\Omega_K\end{equation}
with the lower zero indices and they provide the so called \emph{standard basis}
\begin{equation}\label{eq5.5}
\tau^r\quad (r=1,\ldots,R),\quad\pi^j_s=\mathcal L^s_D\pi^j_0\quad (j=1,\ldots,j_K;\, s=0,1,\ldots\, )\end{equation}
of diffiety $\Omega.$ In fact $j_K=c_0=\mu(\Omega)$ follows from (\ref{eq5.1}) and since $\mathcal R^0$ is flat, there exist even a~basis $\tau^r=dt^r$ $(r=1,\ldots,R=\dim\mathcal R^0).$

The result can be transparently visualized (see Figure~2): the original ``cross--arrows $\mathcal L_Z$'' are ``collected'' in $\mathcal R^0$ and only the infinite sequences $\pi^j_r=\mathcal L^r_D\pi^j_0$ $(j=1,\ldots,\mu(\Omega);\, r=0,1,\ldots)$ without any crossing remain.
\begin{center}\begin{picture}(350,135)(0,-60)
\put(10,50){\line(1,0){10}}\put(20,50){\line(0,-1){55}}\put(5,55){$\Omega_0$}
\put(10,35){\circle{2}}\qbezier(10,35)(20,45)(34,35)\put(35,34){\vector(1,-1){1}}
\put(10,20){\circle{2}}\qbezier(10,20)(20,30)(34,20)\put(35,19){\vector(1,-1){1}}
\put(40,50){\line(1,0){10}}\put(50,50){\line(0,-1){55}}\put(40,55){$\Omega_1$}
\put(40,35){\circle{2}}\qbezier(40,35)(50,45)(64,35)\put(65,34){\vector(1,-1){1}}
\put(40,20){\circle{2}}\qbezier(40,20)(50,20)(64,31)\put(65,33){\vector(1,1){1}}
\put(70,50){\line(1,0){10}}\put(80,50){\line(0,-1){53}}\put(70,55){$\Omega_2$}
\put(68,35){\circle{2}}\qbezier(68,35)(80,45)(94,35)\put(95,34){\vector(1,-1){1}}
\put(40,5){\circle{2}}\qbezier(40,5)(50,15)(64,5)\put(65,4){\vector(1,-1){1}}
\put(68,5){\circle{2}}\qbezier(68,5)(80,15)(94,5)\put(95,4){\vector(1,-1){1}}
\put(5,-37){(2a) the original filtration}
\put(95,55){$\ldots$}
\put(160,48){\line(1,0){10}}
\multiput(170,0)(0,5){10}{\put(0,0){\line(0,1){3}}}
\put(155,55){$\mathcal R^0$}
\put(150,30){\circle{2}}\qbezier(150,30)(165,50)(165,30)\put(165,30){\vector(0,-1){1}}
\put(190,50){\line(1,0){10}}\put(200,50){\line(0,-1){50}}\put(190,55){$\bar\Omega_0$}
\put(190,20){\circle{2}}\qbezier(190,20)(200,30)(214,20)\put(215,19){\vector(1,-1){1}}
\put(220,20){\circle{2}}\qbezier(220,20)(230,30)(244,20)\put(245,19){\vector(1,-1){1}}
\put(250,20){\circle{2}}\qbezier(250,20)(260,30)(274,20)\put(275,19){\vector(1,-1){1}}
\put(280,20){\circle{2}}\qbezier(280,20)(290,30)(304,20)\put(305,19){\vector(1,-1){1}}
\put(185,10){$\pi^j_0$}\put(215,10){$\pi^j_1$}
\put(242,-10){$\pi^{j^{''}}_0$}\put(274,-10){$\pi^{j^{''}}_1$}
\put(250,5){\circle{2}}\qbezier(250,5)(260,15)(274,5)\put(275,4){\vector(1,-1){1}}
\put(280,5){\circle{2}}\qbezier(280,5)(290,15)(304,5)\put(305,4){\vector(1,-1){1}}
\put(220,50){\line(1,0){10}}\put(230,50){\line(0,-1){50}}\put(220,55){$\bar\Omega_1$}
\put(250,50){\line(1,0){10}}\put(260,50){\line(0,-1){50}}\put(250,55){$\bar\Omega_2$}
\put(280,50){\line(1,0){10}}\put(290,50){\line(0,-1){50}}\put(280,55){$\bar\Omega_3=\Omega_2$}
\put(175,-37){(2b) the standard filtration}
\put(333,55){$\ldots$}
\put(145,-57){Figure 2.}
\end{picture}\end{center}

\medskip

Some immediate consequences of this construction are as follows.
\subsection{The uniqueness}\label{subsec5.1} We recall that the module $\mathcal R^0$ is unique. The standard filtration (\ref{eq5.2}) is unique if $\mu(\Omega)=1$ and we refer to quite simple proof \cite[Theorem~26]{T7}. This is historically the main result of the stimulating article \cite{T17} where the concept of the absolute theory was introduced for the first time. Close connection to the beautiful but forgotten explicit solvability \emph{Monge problem} \cite{T19,T20,T6} is worth mentioning, too.
\subsection{The morphisms \cite{T7}}\label{subsec5.2} If $\mathbf m:\mathbf M\rightarrow\mathbf M$ is a~morphism of $\Omega$ then
\[D\mathbf m^*x\cdot\mathbf m^*\mathcal L_D\omega=\mathcal L_D\mathbf m^*\omega\quad (\omega\in\Omega)\]
and in terms of the standard basis
\begin{equation}\label{eq5.6}
\mathbf m^*\tau^r\in\mathcal R^0, D\mathbf m^*x\cdot\mathbf m^*\pi^j_{s+1}=\mathcal L_D\mathbf m^*\pi^j_s.\end{equation}
Such a~morphism is even invertible ($\mathbf m$ is a~symmetry) if and only if
\begin{equation}\label{eq5.7}
\mathbf m^*\mathcal R^0=\mathcal R^0, \pi^j_0\in\mathbf m^*\Omega\quad (j=1,\ldots,\mu(\Omega)).\end{equation}
It is however not easy to apply these results effectively in the general equivalence theory.
For instance, the solution of the symmetry problem even for the favourable case $\mu(\Omega)=1$ in the famed article \cite{T21} was not yet undestood in full detail, see the last sentence in the prominent textbook \cite{T22}.
\subsection{The variations \cite{T7}}\label{subsec5.3} On the contrary, the simple explicit formula
\begin{equation}\label{eq5.8}
Z=zD+\sum z^r\frac{\partial}{\partial t^r}+\sum D^sp^j\frac{\partial}{\partial\pi^j_s}\end{equation}
for all variations holds true. Here $z, p^j\in\mathcal F(\mathbf M)$ are arbitrary functions and $z^r=z^r(t^1,\ldots,t^R)$ arbitrary composed functions. This is a~consequence of general Lemma~\ref{l2.1} applied to the standard basis (\ref{eq5.5}).
\subsection{The infinitesimal symmetries \cite{T6,T5,T7}}\label{subsec5.4} 
Variation (\ref{eq5.8}) generates a~Lie group if and only if it preserves an~appropriate good (equivalently: standard) filtration. This is informally described in Figures 1a and 1b where the ``dotted filtration'' is not known in advance. In the favourable case $\mu(\Omega)=1,$ we may deal only with the unique standard filtration 1a and the dotted Figure 1b can be omitted.
\subsection{The jet diffieties}\label{subsec5.5}  Passing to more individual examples, we introduce the space $\mathbf M(m)$ with local \emph{jet coordinates}
\begin{equation}\label{eq5.9} x,\ w^j_s\quad (j=1,\ldots,m;\, s=0,1,\ldots\,)\end{equation}
and the submodule $\Omega(m)\subset\Phi(\mathbf M(m))$ of all \emph{contact forms}
\begin{equation}\label{eq5.10} 
\omega=\sum f^j_s\omega^j_s\quad (\text{finite sum}, f^j_s\in\mathcal F(\mathbf M(m)), \omega^j_s=dw^j_s-w^j_{s+1}dx).
\end{equation}
It follows that we have a~diffiety: if $\Omega(m)_l\subset\Omega$ $(l=0,1,\ldots)$ is the submodule of the $l$--th \emph{order contact forms} (with $s\leq l$ in formula (\ref{eq5.10})) then
\begin{equation}\label{eq5.11} 
\Omega(m)_*:\Omega(m)_0\subset\Omega(m)_1\subset\cdots\subset\Omega=\cup\Omega(m)_l\end{equation}
is a good filtration. (See also Preface 1.9 for the choice $n=1$ since then $\mathbf M(m)=\mathbf M(m,1)$ and $\Omega(m)=\Omega(m,1).$) Clearly (\ref{eq5.11}) is even a~standard  filtration where $\mathcal R^0=0$ is trivial module and the contact forms $\omega^j_s=\pi^j_s$ provide the standard basis.

We may refer to \cite{T1} for only a~few particular examples of automorphisms
\mbox{$\mathbf m:\mathbf M(m)\rightarrow\mathbf M(m)$ of diffiety $\Omega(m)$}
involving, e.g., the above wave mechanisms. We recall that they are in general unknown.
On the contrary, all variations
\[Z=zD+\sum D^sp^j\ \frac{\partial}{\partial \omega^j_s}\quad (z,p^1,\ldots,p^m \text{ arbitrary functions})\]
are well--known. They constitute a~huge Lie algebra and one of the main tasks of the soliton theory concerns the determination of the special Abelian Lie subalgebras, the so called \emph{integrable hierarchies}. On this occasion, we cannot forget the impressive monograph \cite{T23}.
\subsection{Example: the Hilbert--Cartan equation}\label{subsec5.6} In order to demonstrate quite explicit results, we mention the infinitesimal symmetries of the differential equation $d^2u/dx^2=F(dv/dx)$ with two unknown functions $u=u(x)$ and $v=v(x)$ thoroughly treated in article \cite{T7}. The equation corresponds to the diffiety $\Omega\subset\Phi(\mathbf M)$ generated by the forms
\[\alpha_0=du_0-u_1dx, \alpha_1=du_1-F(v_1)dx, \beta_r=dv_r-v_{r+1}dx\quad (r=0,1,\ldots)\]
in the space $\mathbf M$ with coordinates $x,u_0,u_1,v_r$ $(r=0,1,\ldots).$ Clearly $\mu(\Omega)=1$ and the standard filtration $\bar\Omega_*$ is unique. Assume $F'\neq 0.$ Then $\mathcal R^0=0$ is trivial and we have only one initial form
\[\pi^1_0=F'\alpha+DF'\alpha_0\quad (D=\frac{\partial}{\partial x}+u_1\frac{\partial}{\partial u_0}+F\frac{\partial}{\partial u_1}+\sum v_{r+1}\frac{\partial}{\partial v_r}).\]
The variations
\[Z=zD+\sum D^rp\frac{\partial}{\partial \pi^1_r}\quad (\text{arbitrary }z,p\in\mathcal F(\mathbf M(m)))\]
preserving moreover the standard filtration are just the infinitesimal symmetries. So we have the requirement
\[\mathcal L_Z\pi^1_0=Z\rfloor d\pi^1_0+d\pi^1_0(Z)=Z\rfloor d\pi^1_0+dp=\lambda\pi^1_0\]
with unknown factor $\lambda.$ The calculations are lengthy. First of all, certain formulae \cite[(107)]{T7} not stated here uniquely express $z$ and $\lambda$ in terms of $p.$ Then the crucial equation
\[p=F'^2P(\cdot)+F'Q(\cdot)v_2\quad ((\cdot)=(x,u_0,u_1,v_0,v_1)),\]
where
\[P=\frac{1}{F'}(Q_x+u_1Q_{u_0})+\int \left(\frac{1}{F'}\right)'Fdv_1\cdot Q_{u_1}+\int \left(\frac{1}{F'}\right)'v_1dv_1\cdot Q_{v_0}+\bar P(x,u_0,u_1,v_0)\]
can be derived, see \cite[(133)]{T7}. In the \emph{generic subcase} \cite[(144)]{T7} we obtain the final solution
\[Q=(Ax+\bar A)u_1+Bv_0+C_1x-2Au_0+C_3, \bar P=Av_0+C\quad (A,\ldots,C\in\mathbb R)\]
but special functions $F$ admit more symmetries. For instance the exceptional 14--dimensional Lie algebra $\mathbb G_2$ of symmetries for the \emph{Hilbert--Cartan equation} where $F=(dv/dx)^{1/2}$ was obtained \cite[(175)]{T7} by direct calculations in full accordance with the article \cite{T21}.
\subsection{Example: the Monge equation}\label{subsec5.7} We mention the differential equation $dw/dx=F(du/dx,dv/dx)$ with nonconstant $F,$ however, only some conceptual topics will be discussed and we refer to \cite{T7} for more detailed survey.
The corresponding diffiety $\Omega\subset\Phi(\mathbf M)$ has the natural basis denoted
\[\alpha_r=du_r-u_{r+1}dx,\beta_r=dv_r-v_{r+1}dx\quad (r=0,1,\ldots), \gamma=dw-F(u_1,v_1)dx\]
in the space $\mathbf M$ with coordinates $x,u_r,v_r$ $(r=0,1,\ldots)$ and $w.$ The forms $\alpha_r,\beta_r$ $(r\leq l)$ and $\gamma$ constitute a~basis of module $\Omega_l$ $(l=0,1,\ldots)$ of the original good filtration $\Omega_*.$ The obvious identity
\[\mathcal L_D\gamma=F_{u_1}\alpha_1+F_{v_1}\beta_1\quad (D=\frac{\partial}{\partial x}+\sum u_{r+1}\frac{\partial}{\partial u_r}+\sum v_{r+1}\frac{\partial}{\partial v_r}+F\frac{\partial}{\partial w}\in\mathcal H)\]
represents the ``cross'' in Figure~2a. Then $\mathcal R^0=0$ and the forms
\[\pi^1_0=\gamma-F_{u_1}\alpha_0-F_{v_1}\beta_0,\ \pi^2_0=\beta_0\]
are initial for the standard filtration of Figure~2b. The alternative basis
\[\pi^1_r=\mathcal L_D^r\pi^1_0\quad (\text{not explicitly stated}),\ \pi^2_r=\mathcal L_D^r\beta_0=\beta_r\quad (r=0,1,\ldots)\]
of diffiety $\Omega$ is better adapted for the calculations than the original one thanks to the absence of ``crossings''.

Let us start with \emph{morphisms} $\mathbf m:\mathbf M\rightarrow\mathbf M$ of diffiety $\Omega.$ They are determined by formulae
\[\mathbf m^*\pi^1_0=\sum a^j_r\pi^j_r,\ \mathbf m^*\pi^2_0=\sum b^j_r\pi^j_r\quad (\text{arbitrary coefficients})\]
since the remaining forms
\[\mathbf m^*\pi^k_r=\mathbf m^*\mathcal L_D^r\pi^k_0\quad (k=1,2;\, r=1,2,\ldots)\]
satisfy the recurrence (\ref{eq5.6}). We are however interested in \emph{invertible morphisms} $\mathbf m$ and then the criterion (\ref{eq5.7}) provides rather strong additional condition for the coefficients $a^j_r$ and $b^j_r.$

On the contrary, the \emph{variations}
\[Z=zD+\sum D^rp\ \frac{\partial}{\partial\pi^1_r}+\sum D^rq\ \frac{\partial}{\partial\pi^2_r}\quad (\text{arbitrary }z,p,q)\]
are given by simple explicit formula. We are however interested in \emph{symmetries}~$Z$ and they moreover are bound to preserve a~certain good filtration (Figures~1a and 1b). Since $\mu(\Omega)=2,$ there are many possibilities and this is the reason why the symmetry problem for our seemingly simple Monge equation cannot be ultimately resolved. We can only refer to three particular examples of symmetries
\[\mathcal L_Z\pi^1_0=\mu\pi^1_0,\ \mathcal L_Z\pi^2_0=\lambda^1_0\pi^1_0+\lambda^2_0\pi^2_0+\lambda^1_1\pi^1_1,\]
\[\mathcal L_Z\pi^1_0=\mu^1_0\pi^1_0+\mu^2_0\pi^2_0+\mu^2_1\pi^2_1,\ \mathcal L_Z\pi^2_0=\mu\pi^2_0,\]
\[\mathcal L_Z\pi^1_0=\lambda^1\pi^1_0+\lambda^2\pi^2_0,\ \mathcal L_Z\pi^2_0=\mu^1\pi^1_0+\mu^2\pi^2_0\]
discussed in \cite{T7} especially for the case $F=u_1v_1.$
\subsection{Continuation: the Monge problem}\label{subsec5.8} There exists an~automorphism $\mathbf m:\mathbf M(3)\rightarrow\mathbf M(3)$ of the jet diffiety $\Omega(3)$ such that
\[\mathbf m^*(x-F(w^1_0,w^2_0))=w^3_1-F(w^1_1,w^2_1)\]
for every function $F,$ see \cite[Appendix]{T7}. Alternatively saying, the family of all curves $w^j_0=w^j_0(x)$ $(j=1,2,3)$ satisfying $x=F(w^1_0,w^2_0)$ is identified with the solutions of the Monge equation
\[\frac{dw}{dx}=F\left(\frac{du}{dx},\frac{dv}{dx}\right)\qquad (w=w^3_0, u=w^1_0, v=w^2_0).\]
Still, in other terms, assume $F\neq const.$ Then we have the subspace $\mathbf N\subset\mathbf M(3)$ given by equations $D^r(x-F)=0$ $(r=0,1,\ldots)$ which is clearly isomorphic to $\mathbf M(2)$ and as a~result, \emph{the corresponding jet diffiety} $\Omega(2)\subset\Phi(\mathbf M(2))$ \emph{is isomorphic to the diffiety} $\Omega\subset\Phi(\mathbf M)$ \emph{of the Monge equation}. It follows that the Monge equation \[\frac{dw}{dx}=F\left(\frac{du}{dx},\frac{dv}{dx}\right)\] can be resolved by certain explicit algebraical formulae involving two arbitrary functions.
\subsection{Calculus of variations \cite{T6}--\cite{T27}, \cite{T7}}\label{subsec5.9} 
The classical Lagrange problem concerning the one--dimensional variational integral subjected to differential constraints is represented by a~diffiety $\Omega\subset\Phi(\mathbf M)$ together with a~given form $\varphi\in\Phi(\mathbf M).$ We are interested in the \emph{variational integrals}
\[\int_a^b\mathbf n^*\varphi\qquad (\mathbf n:\mathbf N\rightarrow\mathbf M,\,\mathbf n^*\Omega=0,\, \mathbf N:a\leq x\leq b\subset\mathbb R)\]
evaluated for the solutions $\mathbf n:\mathbf N\rightarrow\mathbf M$ of diffiety $\Omega.$ Such a~solution is called \emph{extremal} if $\mathbf n^*Z\rfloor d\varphi=0$ for all variations $Z.$ This is in full accordance with the common approach since then the obvious identities
\[\mathbf n^*\mathcal L_Z\varphi=d\,\mathbf n^*\varphi (Z), \int_a^b\mathbf n^*\mathcal L_Z\varphi=\mathbf n^*\varphi (Z)|_{x=a}^{x=b}\] declare that the variation of the integral (in the common sense) indeed depends only on the boundary values. One can observe that the form $\varphi$ can be replaced with any form $\varphi+\omega$ $(\omega\in\Omega)$ without change of the extremals. 
At this place, let us apply the standard filtrations. Assuming the controllability $\mathcal R^0=0,$ a~unique \emph{Poincar\'e--Cartan form} $\varphi+\breve\omega$ with appropriate $\breve\omega\in\Omega$ exists such that
\begin{equation}\label{eq5.12}
d\breve\varphi\cong\sum e^j\pi^j_0\wedge dx\quad (\text{mod }\Omega\wedge\Omega).\end{equation}
This implies that the extremals $\mathbf n$ are characterized by the \emph{Euler--Lagrange equations} $\mathbf n^*e^j=0$ $(j=1,\ldots,\mu(\Omega))$ and satisfy the identity $\mathbf n^*Z\rfloor d\breve\varphi=0$ for \emph{all vector fields} $Z\in\mathcal T(\mathbf M).$

It follows that $e^j=0$ identically if and only if $\breve\varphi$ is a~total differential of appropriate function $g$ hence $\varphi\cong \breve\varphi\cong Dg\cdot dx$ $(\text{mod }\Omega).$
The \emph{Noether theorem} immediately follows as well. Assuming
\begin{equation}\label{eq5.13}\mathcal L_Z\Omega\subset\Omega,\ \mathcal L_Z\varphi\in\Omega\qquad (\text{appropriate }Z\in\mathcal T(\mathbf M)),\end{equation}
the function $\mathbf n^*Z\rfloor d\breve\varphi$ is clearly constant for every extremal $\mathbf n.$
Also the investigation of all symmetries $Z$ of the variational problem (they are defined by properties (\ref{eq5.13})) simplifies \cite{T7}[Sections~7 and 8].
\subsection{Example: a~variational integral}\label{subsec5.10} Explicit formulae are rather clumsy in the general case. So we mention only the variational integral
\[\int f(x,u_0,v_0,w_0,u_1,v_1)dx\qquad (u_r=\frac{d^ru}{dx^r}, v_r=\frac{d^rv}{dx^r})\]
subjected to the constraint \[w_1=F(x,u_0,v_0,w_0,u_1,v_1).\]
Assuming
\[a=F_{v_0}-DF_{v_1}+F_{w_0}F_{v_1}\neq 0\quad (D=\frac{\partial}{\partial x}+\sum u_{r+1}\frac{\partial}{\partial u_r}+\sum v_{r+1}\frac{\partial}{\partial v_r}+F\frac{\partial}{\partial w_0}),\]
there exists the Poincar\'e--Cartan form
\[\breve\varphi=fdx+(f_{u_1}-\frac{b}{a}F_{u_1})\alpha+(f_{v_1}-\frac{b}{a}F_{v_1})\beta-\frac{b}{a}\gamma\]
where
\[b=f_{v_0}-Df_{v_1}+f_{w_0}F_{v_1}, \alpha=du_0-u_1dx, \beta=dv_0-v_1dx, \gamma=dw_0-Fdx\]
and the Euler--Lagrange equations
\[e^1=f_{w_0}-\frac{b}{a}F_{w_0}-D\frac{b}{a}=0,\ e^2=B-\frac{b}{a}A=0\]
where 
\[A=F_{u_0}-DF_{u_1}+F_{w_0}F_{u_1}, \ B=f_{u_0}-Df_{u_1}+f_{w_0}F_{u_1}.\]
Since no uncertain multipliers appear, the Legendre, Jacobi, Hilbert--Weierstrass extremality conditions, the Hamilton--Jacobi equations and the geodesic fields can be investigated without much difficulty \cite{T24}--\cite{T27} quite analogously as in the traditional unconstrained theory.

\end{document}